\newtheorem{theorem}{Theorem}
\newtheorem{corollary}{Corollary}
\newtheorem{remark}{Remark}
\newtheorem{example}{Example}
\def\cH{{\mathcal{H}}}
\def\R{{\rm I\!R}}
\def\M{{\mathcal{M}}}
\def\A{{\mathcal{A}}}
\def\B{{\mathcal{B}}}
\def\I{{\mathcal{I}}}
\def\cR{{\mathcal{R}}}
\def\T{{\mathcal{T}}}
\def\S{{\mathcal{S}}}
\def\X{{\mathcal{X}}}
\def\Y{{\mathcal{Y}}}
\def\Z{{\mathcal{Z}}}
\def\argmin{\mathop{\rm arg\,min}}
\def\Argmin{\mathop{\rm Arg\,min}}
\title{\sf The proximal-proximal gradient algorithm}
\author{Ting Kei Pong \thanks{Department of Computer Science,
University of British Columbia, Vancouver, BC, Canada V6T 1Z4. E-mail: {tkpong@cs.ubc.ca}.
This author is a PIMS postdoctoral fellow.}
}
\date{August 23, 2013} 
\begin{document}
\maketitle

\begin{abstract}
  We consider the problem of minimizing a convex objective which
  is the sum of a smooth part, with Lipschitz continuous gradient, and a nonsmooth part.
  Inspired by various applications, we focus on the case when the nonsmooth part is a composition of a proper closed convex function $P$
  and a nonzero affine map, with the proximal mappings of $\tau P$, $\tau > 0$, easy to compute.
  In this case, a direct application of the widely used proximal gradient algorithm
  does not necessarily lead to easy subproblems. In view of this, we propose a new algorithm, the proximal-proximal gradient algorithm,
  which admits easy subproblems. Our algorithm
  reduces to the proximal gradient algorithm if the affine map is just the identity map and the stepsizes are suitably chosen, and it
  is equivalent to applying a variant of the alternating minimization algorithm \cite{Tse91} to the dual problem.
  Moreover, it is closely related to inexact proximal gradient algorithms \cite{Patriksson98,SchRouBach11}.
  We show that the whole sequence generated from the algorithm
  converges to an optimal solution. We also establish an upper bound on iteration complexity.
  Our numerical experiments on the stochastic realization problem and the logistic fused lasso problem suggest that the algorithm
  performs reasonably well on large-scale instances.
\end{abstract}

\section{Introduction}

We consider the following optimization problem
\begin{equation}\label{FBS}
  \begin{array}{rl}
    v_{\rm opt} := \min\limits_z & F(z) := h(z) + P(\M z-b),
  \end{array}
\end{equation}
where $h$ is a convex differentiable function with a Lipschitz continuous gradient whose Lipschitz continuity modulus is
bounded by $L$,
$P$ is a proper closed convex function, $\M$ is a nonzero linear map between two finite dimensional Hilbert spaces $\Z$ and $\Y$, and $b$ is
a given element in $\Y$. We assume throughout that \eqref{FBS} has an optimal solution. We also assume the following constraint qualification
to rule out degenerate instances:
\begin{equation}\label{CQ}
  ({\rm Range}(\M) - b)\cap {\rm ri}({\rm dom}\,P)\neq \emptyset;
\end{equation}
see Section~\ref{sec1.1} for notations and definitions.
This condition is trivially satisfied if $P$ is continuous or $\M$ is surjective. Furthermore,
we assume that the proximal mapping of $P$, i.e., the quantity
\[
{\rm prox}_P(y) := \argmin_u \left\{P(u) + \frac{1}{2}\|u - y\|^2\right\}
\]
is easy to compute for any given $y$, so is that of $\tau P$ for any $\tau > 0$.\footnote{Here and throughout the paper,
by ``easy to compute", we mean an easy, and preferably a closed form solution, is available.}

Problem \eqref{FBS} arises from various applications, with $h$ usually the loss function, $P$ the regularization function
and $\M z - b$ modeling some priors known about the desired solution. One example arises from
the maximum a posteriori probability analysis approach in signal processing \cite{ElMiRu07}, where $b = 0$, $P$ is
the $\ell_1$ norm and $\M$ is the so-called analyzing operator,
with a higher value of $P(\M z)$ for a less probable signal. Another example arises from the system identification or system realization problems, where $P$ is the nuclear norm of a matrix (i.e., sum of singular values of a matrix), $b=0$
and $\M z$ is a block Hankel matrix of suitable size; see, for example, \cite{FPST12,LiV08,LiV09,LHV12,V12}.
Finally, we also mention the fused lasso logistic regression problem introduced recently in \cite{MaZhang13}
in which $h$ is the smooth logistic loss function, $b=0$, $P$ is the $\ell_1$ norm and $\M$ is a suitably defined linear map.
We refer the readers to \cite[Section~5]{MaZhang13} for more details about this particular problem. Notice that
in all the above applications, the proximal mapping of $P$ and those of $\tau P$, $\tau > 0$ are computable via a closed form formula;
see, for example, \cite[Section~9.2]{ToSuSu12}.

Since $h$ is differentiable with Lipschitz continuous gradient and $P$ is nonsmooth, it is natural to consider a proximal gradient algorithm \cite{FukuMine81}
for solving \eqref{FBS}, where one updates
\begin{equation}\label{FBSsubproblem}
  z^{t+1} = \argmin_z \left\{ \langle\nabla h(z^t),z\rangle + P(\M z - b) + \frac{L}{2}\|z - z^t\|^2\right\}.
\end{equation}
For such an algorithm, it is well-known that $F(z^t) - F(z^*) = O\left(\frac{1}{t}\right)$;
see, for example, \cite[Theorem~2.1.14]{Nes03}.
Furthermore, the whole sequence generated is convergent to an optimal solution of \eqref{FBS}; see, for example, \cite[Section~2]{Tse91}.
This simple algorithm, together with its accelerated versions using Nesterov
extrapolation techniques \cite{Nes83,Nes03,Nes05,Nes052}, has recently been studied extensively in the literature; see, for example, \cite{BeckTeboulle09,BeckerCandesGrant11,FPST12,LanLuMon09,MGC10,MoF10,PTJY09,ToYu10,Tse10}.
While the proximal gradient algorithm has nice convergence property, for
the algorithm to be efficient, the optimization problem in \eqref{FBSsubproblem} should be
easy to solve. This problem is the same as computing the proximal mapping of $\frac{1}{L}P(\M\cdot - b)$.
While the proximal mapping of $\frac{1}{L}P$ is easy to compute, the proximal mapping of the composition
$\frac{1}{L}P(\M\cdot - b)$, however, is in general not trivial to compute.

To get around this difficulty, one natural way is to ``decouple" the function $P$ and the affine map.
One possible way is to reformulate the problem as
\begin{equation}\label{specialprimal}
  \begin{array}{rl}
    \min\limits_{z,u} & h(z) + P(u)\\
    {\rm s.t.}& u = \M z - b,
  \end{array}
\end{equation}
and apply a suitable algorithm, for example, the alternating direction method of multipliers (ADMM);
see, for example, \cite{Eckstein94,EckB92,FGlowinski83,Gabay83,GaM76,GlM75}. This algorithm consists of the following updates
\begin{equation*}
\left\{
\begin{split}
&z^{t+1} \in \Argmin_z \left\{ h(z) + \langle{y^t},\M z\rangle + \frac{\beta}{2}\|u^t - \M z + b\|^2 \right\},\\
&u^{t+1} = \argmin_u \left\{ P(u) - \langle{y^t},u\rangle + \frac{\beta}{2}\|u - \M z^{t+1} + b\|^2 \right\},\\
&y^{t+1} = y^t - \gamma\beta(u^{t+1} -\M z^{t+1} + b),
\end{split}
\right.
\end{equation*}
where $\beta > 0$ and $\gamma\in (0,\frac{\sqrt{5}+1}{2})$, and we use capital $\Argmin$ to indicate that the minimizer
may not be unique.
While the second subproblem is just a computation of the proximal mapping of $\frac{1}{\beta}P$,
the first subproblem involving $h$ is in general not easy to solve.
Recent variants of this method have {\em proximal terms} added to the objectives for the $z$-update and $u$-update, in order to
reduce the subproblem to a computation of the proximal mappings of $\frac{1}{\beta}h$ and $\frac{1}{\beta}P$, respectively;
see, for example, \cite{YaZ09,ZBO09}. However, the proximal mapping of $\frac{1}{\beta}h$ can still be difficult to compute.
In the recent work \cite{MaZhang13}, a variant of the above ADMM was proposed to solve \eqref{FBS} so that each subproblem
has a simple closed form solution.
For this method, it was only shown that any cluster point of the average of the sequence $\{(z^t,u^t)\}$ generated minimizes a Lagrangian
function of \eqref{specialprimal}.

Another possible way to ``decouple" the function $P$ and the affine map is to reformulate the problem into a convex-concave optimization problem.
In this approach, one first rewrite \eqref{FBS} as a convex-concave optimization problem as follows:
\begin{equation}\label{saddlept}
  \min_z \max_y \{h(z) + \langle y, \M z - b\rangle - P^*(y)\},
\end{equation}
where $P^*$ is the convex conjugate of $P$.
One can then apply a suitable algorithm, for example, Tseng's modified forward backward splitting (MFBS) method proposed in \cite{Tse00}, which
consists of the following updates in each iteration (see \cite[Example~5]{Tse00})
\begin{equation}\label{extragradient}
\left\{
\begin{split}
& v^t = \argmin_y\left\{ P^*(y) + \langle b - \M z^t,y\rangle + \frac{L_\M}{2\sigma}\|y - y^t\|^2\right\},\ \ \ \ \ \
 u^t = z^t - \frac{\sigma}{L_\M}(\nabla h(z^t) +\M^*y^t),\\
& y^{t+1} = v^t - \frac{\sigma}{L_\M}(\M z^t - \M u^t),\ \
 z^{t+1} = u^t - \frac{\sigma}{L_\M}(\nabla h(u^t) +\M^* v^t - \nabla h(z^t) -\M^* y^t),
\end{split}
\right.
\end{equation}
where $\sigma\in (0,1)$ and $L_\M$ is an upper bound of the Lipschitz continuity modulus of the function
\[
G(z,y) = \begin{pmatrix}
  \nabla h(z) + \M^* y\\
  b - \M z
\end{pmatrix},
\]
and $\M^*$ is the adjoint linear map of $\M$.
The subproblem in \eqref{extragradient} can be shown to be equivalent to a computation of the proximal mapping of $\frac{L_\M}{\sigma}P$,
and it was further shown in \cite[Theorem~3.4(b)]{Tse00} that the sequence $\{(u^t,v^t)\}$ converges to a saddle point of \eqref{saddlept}.

In this paper, we propose a new algorithm, called the proximal-proximal gradient algorithm,
to solve \eqref{FBS}, which has nice convergence property and admits easy subproblems in each iteration like
the MFBS method mentioned above. Our algorithm is obtained by applying
a variant of the alternating minimization algorithm \cite{Tse91} (AMA) to the dual problem of \eqref{FBS}.
It reduces to the proximal gradient algorithm if $\M$ is the identity map, $b=0$ and the stepsizes are suitably chosen;
and is closely related to inexact proximal gradient algorithms \cite{Patriksson98,SchRouBach11}.
For this new algorithm, we prove that the whole sequence generated converges to an optimal solution of \eqref{FBS}
and establish an upper bound on iteration complexity. We also test our algorithm
on the stochastic realization problem (see, for example, \cite{LiV09}) against the primal ADMM2 in \cite{FPST12} and
the MFBS method in \cite{Tse00},
as well as on the fused lasso logistic regression problem (see \cite{MaZhang13}) against
the MFBS method in \cite{Tse00}.
Our computational results show that our algorithm works reasonably well on large-scale instances
and usually outperforms the MFBS method.

The rest of the paper is organized as follows. We introduce notations used throughout the paper in Section~\ref{sec1.1}
and motivate our algorithm in Section~\ref{sec2}. In Section~\ref{sec:AMA}, we propose a proximal alternating minimization algorithm
and establish its global convergence. In Section~\ref{sec4}, we present our proximal-proximal gradient algorithm and
discuss its various properties, including convergence and its relationship with inexact proximal gradient algorithms.
Numerical results are reported in Section~\ref{sec5}. Finally, we give concluding remarks in Section~\ref{sec6}.

\subsection{Notations}\label{sec1.1}

In this paper, scripted letters $\X$, $\Y$ and $\Z$ denote finite dimensional Hilbert spaces.
Abusing notations, we use $\langle\cdot,\cdot\rangle$
to denote the inner product and $\|\cdot\|$ to denote the norm induced from the inner product on each of the spaces.
Linear maps between finite dimensional Hilbert spaces
are denoted by scripted letters (other than $\X$, $\Y$ and $\Z$). For a linear map $\A$, $\A^*$ denotes the adjoint linear map with respect
to the inner product, ${\rm Range}(\A)$ denotes the image of $\A$ and
$\|\A\|$ is the induced operator norm of $\A$.
A linear self-map $\T:\Z\rightarrow \Z$ is called positive semidefinite (resp., positive definite) if
$\T = \T^*$ and $\langle z,\T z\rangle\ge 0$ (resp., $\langle z,\T z\rangle> 0$) for all nonzero $z\in \Z$. We let $\T \succeq 0$ (resp., $\T\succ 0$) denote $\T$ is a positive semidefinite
(resp., positive definite) linear map. For a positive semidefinite linear map $\T$, $\|\cdot\|_\T$ denotes its induced seminorm defined by
$\|z\|_\T := \sqrt{\langle z,\T z\rangle}$ for all $z\in \Z$.
We denote the identity map by $\I$.

For a closed convex function $f:\Z\rightarrow (-\infty,\infty]$, the domain of $f$ is a convex set and is defined by
${\rm dom}\,f :=\{z\in \Z:\; f(z) < \infty\}$.
A closed convex function $f$ is called proper if ${\rm dom}\,f\neq\emptyset$. The set of subdifferentials of $f$
at a point $z\in \Z$, denoted by $\partial f(z)$, is defined by
\[
\partial f(z) := \{v\in \Z:\; f(u) - f(z)\ge \langle v,u - z\rangle\ \ \forall u\in \Z\}.
\]

For a proper closed convex function $f:\Z\rightarrow (-\infty,\infty]$, the convex conjugate $f^*$ of $f$ is the proper closed convex function defined by
\[
f^*(z) := \sup_{u}\{\langle z,u\rangle - f(u)\}.
\]
It is well-known \cite[Theorem~12.2]{Roc70} that
\begin{equation}\label{biconjugate}
f^{**} = f.
\end{equation}
The proximal mapping of $f$ is defined by
\[
{\rm prox}_f(z) := \argmin_{u}\left\{f(u)+ \frac{1}{2}\|u - z\|^2\right\},
\]
where $\argmin$ denotes the (unique) minimizer. Recall from \cite[Theorem~31.5]{Roc70} that the minimizer of the above optimization problem always exists
and is unique, hence the proximal mapping is well-defined. Moreover, from the same theorem, we have the following relation:
\begin{equation}\label{proxmap}
{\rm prox}_f(z) + {\rm prox}_{f^*}(z) = z\ \ \ \forall z\in \Z.
\end{equation}
Finally, we use ${\rm ri}(C)$ to denote the relative interior of a convex set $C$.

\section{Motivations}\label{sec2}

Recall that the main difficulty for the efficient implementation of the proximal gradient algorithm \eqref{FBSsubproblem} lies in the fact that
the proximal mapping of $\tau P(\M\cdot - b)$, $\tau > 0$, is not necessarily easy to compute. In this section, we will try to look
at the proximal gradient algorithm \eqref{FBSsubproblem} from a dual perspective. In particular, we revisit the well-known fact that
the proximal gradient algorithm is equivalent to applying the AMA to the dual of \eqref{FBS}, a result established in \cite[Section~2]{Tse91}.

To describe the equivalence, we note from \eqref{CQ} and \cite[Theorem~31.2]{Roc70} that
\begin{align*}
  v_{\rm opt}  = \min_{z}h(z) + P(\M z - b)
     = \max_y -h^*(-\M^*y) - P^*(y) - \langle b,y\rangle,
\end{align*}
where $h^*$ and $P^*$ are the convex conjugate of $h$ and $P$ respectively.
Thus, the dual of \eqref{FBS} is equivalent to the following problem:
\begin{equation}\label{fencheldual}
  \begin{array}{rl}
    \min\limits_{x,y}& h^*(x) + P^*(y) + \langle b,y\rangle\\
    {\rm s.t.}& x + \M^*y = 0.
  \end{array}
\end{equation}

Since $\nabla h$ is Lipschitz continuous with modulus bounded by $L$, the function
$h^*$ is strongly convex with strong convexity modulus at least $\frac{1}{L}$.
Hence, one can apply the AMA in \cite[Section~4]{Tse91} to solve the dual problem \eqref{fencheldual}:
\begin{equation}\label{AMAsubproblem}
\left\{
\begin{split}
 & x^{t+1} = \argmin_x \left\{ h^*(x) - \langle {z^t},x\rangle\right\},\\
 & y^{t+1} \in \Argmin_y \left\{ P^*(y) + \langle b,y\rangle - \langle {z^t}, \M^*y\rangle + \frac{\beta}{2}\|x^{t+1} + \M^*y\|^2\right\},\\
 & z^{t+1} = z^t - \beta (x^{t+1} + \M^*y^{t+1}).
\end{split}
\right.
\end{equation}
This algorithm is known to converge if $\beta \in (0,\frac{2}{L})$, assuming that the iterates are well-defined;
see \cite[Proposition~3]{Tse91} for the precise assumptions. Moreover, with $\beta = \frac{1}{L}$,
it can be shown (see \cite[Section~2]{Tse91}) that this algorithm is equivalent to the proximal gradient algorithm \eqref{FBSsubproblem},
in the sense that the $z$-iterates generated are the same.
To be precise, we include the simple arguments below for completeness.

Indeed, from the first subproblem of \eqref{AMAsubproblem} we see that
\begin{equation}\label{xupdateAMA}
x^{t+1} = \argmin_x \left\{ h^*(x) - \langle {z^t},x\rangle\right\} \Leftrightarrow z^t\in \partial h^*(x^{t+1}) \Leftrightarrow x^{t+1} = \nabla h(z^t),
\end{equation}
where the last equivalence follows from \cite[Theorem~23.5]{Roc70}.
On the other hand, from the second and third subproblem of \eqref{AMAsubproblem}, we see that
\begin{equation}\label{yupdateAMA}
\begin{split}
    &  y^{t+1} \in \Argmin_y \left\{ P^*(y) + \langle b,y\rangle - \langle {z^t}, \M^*y\rangle + \frac{\beta}{2}\|x^{t+1} + \M^*y\|^2\right\}\\
    & \Leftrightarrow \M z^{t+1} - b \in \partial P^*(y^{t+1}) \Leftrightarrow y^{t+1}\in \partial P(\M z^{t+1} - b) \\
    & \Rightarrow
    \M^* y^{t+1}\in \M^*\partial P(\M z^{t+1} - b) \Rightarrow \M^* y^{t+1} \in \partial P(\M \cdot - b)(z^{t+1}),
  \end{split}
\end{equation}
where the second equivalence follows from \cite[Theorem~23.5]{Roc70} and the last implication follows from \cite[Theorem~23.9]{Roc70}.
From these it is easy to verify that $\{z^t\}$ generated from the AMA satisfies \eqref{FBSsubproblem}. Conversely, if $\{z^t\}$ is generated according to \eqref{FBSsubproblem}, then from the first-order optimality condition, \eqref{CQ} and the subdifferential calculus rules \cite[Theorem~23.8]{Roc70}, \cite[Theorem~23.9]{Roc70}, we have
\begin{equation*}
  0 \in \nabla h(z^t) + \M^* \partial P(\M z^{t+1} - b) + L (z^{t+1} - z^t).
\end{equation*}
Define $x^{t+1} = \nabla h(z^t)$ and let $y^{t+1}\in \partial P(\M z^{t+1} - b)$ so that
$ 0 = \nabla h(z^t) + \M^* y^{t+1} + L (z^{t+1} - z^t)$. Then it is easy to see that
\begin{equation*}
  z^{t+1} = z^t - \frac{1}{L}(x^{t+1} + \M^* y^{t+1}).
\end{equation*}
Using these relations, $\beta = \frac{1}{L}$ and the
equivalences in \eqref{xupdateAMA} and \eqref{yupdateAMA}, we observe that the sequence $\{(x^t,y^t,z^t)\}$ are the same iterates as generated from the AMA.

Since the AMA is in some sense equivalent to the proximal gradient algorithm, one should expect that the second subproblem involving $P^*$ is no easier than \eqref{FBSsubproblem}. However, we do gain some insights from this dual perspective.
In the next section,
borrowing the idea of adding a proximal term which was recently discussed in \cite{ZBO09} in the settings of the ADMM,
we develop a proximal alternating minimization algorithm which modifies the $y$-update in \eqref{AMAsubproblem}.
The resulting algorithm is then applied to \eqref{FBS} in Section~\ref{sec4} to give our proximal-proximal gradient algorithm.

\section{A proximal alternating minimization algorithm}\label{sec:AMA}

In this section, we consider a variant of the alternating minimization algorithm (AMA) proposed in \cite[Section~4]{Tse91}.
This method aims at solving optimization problems in the following form:
\begin{align}\label{genprimal}
\begin{array}{rl} \min\limits_{x,y} & f(x)+g(y) \\ {\rm s.t.} &  \A x+\B y=c,
\end{array}
\end{align}
where $\A: \X\rightarrow \Z$ and $\B:\Y\rightarrow \Z$ are linear maps on the finite dimensional Hilbert spaces $\X$, $\Y$ and $\Z$, $c\in \Z$,
$f: \X \rightarrow (-\infty,+\infty]$ and $g:\Y \rightarrow (-\infty,+\infty]$ are proper closed convex functions,
with linear maps $\Sigma_f\succ 0$ and $\Sigma_g\succeq 0$ such that for any $u_1\in \partial f(x_1)$
and $u_2\in \partial f(x_2)$, we have
\begin{equation}\label{strongconvexf}
  \langle u_1-u_2,x_1-x_2\rangle \ge \|x_1-x_2\|_{\Sigma_f}^2;
\end{equation}
while for any $v_1\in \partial g(y_1)$
and $v_2\in \partial g(y_2)$, we have
\begin{equation}\label{convexg}
  \langle v_1-v_2,y_1-y_2\rangle \ge \|y_1-y_2\|_{\Sigma_g}^2,
\end{equation}
where $\|\cdot\|_\T$ is the semi-norm induced by a positive semidefinite linear map $\T$.
We add a proximal term in the second subproblem of the AMA and obtain the following so-called
proximal AMA for solving the above problem. It reduces to the original AMA if $\T = 0$ and $\gamma = 1$.

\vspace{.1in}
\fbox{\parbox{5.6 in}{
\begin{description}
\item {\bf Proximal AMA for \eqref{genprimal}}

\item[Step 0.] Input $(y^0, z^0) \in \Y\times \Z$.

\item[Step 1.] Set
\begin{equation}\label{scheme}
\left\{
\begin{split}
&x^{t+1}=\argmin_{x}\left\{f(x)-\langle z^t, \A x\rangle\right\}, \\
&y^{t+1}=\argmin_{y} \left\{g(y)-\langle z^t, \B y\rangle + \frac{\beta}{2} \|\A x^{t+1}+\B y-c\|^2+\frac{1}{2}\|y-y^t\|_\T^2\right\},\\
&z^{t+1}=z^t-\gamma\beta  (\A x^{t+1}+\B y^{t+1}-c),
\end{split}
\right.
\end{equation}
where $\beta >0$, $\gamma > 0$ and $\T$ is a positive semidefinite linear map.

\item[Step 2.] If a termination criterion is not met, go to Step 1.
\end{description}
}}
\vspace{.1in}

When we apply the proximal AMA, we should expect $\min_xf(x) + \langle a,x\rangle$ to have an easy/closed form solution
for any given $a$ so that the first subproblem in \eqref{scheme} is easy to solve.
The motivation for introducing the proximal term in the second subproblem is to simplify the subproblem for the $y$-update by reducing it to a computation
of the proximal mapping of $\frac{1}{\beta}g$, as discussed recently in \cite{ZBO09} in the settings of ADMM. This would be particularly useful
when the proximal mapping of $\frac{1}{\beta}g$ is easy to compute. Moreover, as discussed in \cite[Appendix~B]{FPST12},
$\T$ should be chosen as ``small" as possible in practice, and thus preferably not positive definite.

In the next theorem, we establish convergence of the above proximal AMA, under suitable assumptions.
The tools used in the proof are quite standard and is largely based on \cite{FGlowinski83} for the convergence of the alternating direction
method of multipliers (ADMM), a closely related algorithm.
See also \cite[Section~6]{Gabay83}, \cite[Appendix~A]{Tse91}, \cite{XuW2011} and \cite[Theorem~B.1]{FPST12}.
We also note that the convergence of AMA without the proximal term (i.e., $\T = 0$), $\gamma=1$ and with a varying step size $\beta$
was established in \cite[Proposition~3]{Tse91}, under slightly different assumptions from below.

Before stating the theorem, we make the following assumption:
\begin{enumerate}[{\bf {A}1.}]
  \item There exist $(\bar{x},\bar{y}) \in \X\times \Y$ and $\bar{z} \in \Z$ such that
\begin{equation}\label{genopt}
\A^*\bar{z} \in \partial f(\bar{x}), \quad \B^*\bar{z} \in \partial
g(\bar{y}), \quad \A\bar{x}+\B\bar{y}-c=0.
\end{equation}
\end{enumerate}
Note that for any $(\bar{x},\bar{y}) \in \X\times \Y$ and $\bar{z} \in \Z$ satisfying \eqref{genopt}, $(\bar{x},\bar{y})$
is an optimal solution to \eqref{genprimal} and $\bar z$ is an optimal solution to the dual. Moreover, under a suitable constraint qualification, optimal solutions
to the dual problem of \eqref{genprimal} exist (see \cite[Corollary~28.2.2]{Roc70}), with \eqref{genopt} being satisfied by any optimal solution $(\bar{x},\bar{y})$ to problem~\eqref{genprimal} (see \cite[Corollary~28.3.1]{Roc70}) and any optimal solution $\bar{z}$ to the dual problem of \eqref{genprimal} (see \cite[Corollary~28.4.1]{Roc70}).\footnote{We note here that it is very common in the literature
to use a constraint qualification in place of {\bf A1} as the standing assumption for a convergence result. However,
since we will soon apply this proximal AMA to the dual problem \eqref{fencheldual} of \eqref{FBS}, we chose to use {\bf A1}, which is a condition
readily shown to hold for \eqref{fencheldual} under our assumptions on \eqref{FBS}; see Theorem~\ref{them_ppg}.}


\begin{theorem}\label{them_ama}
Suppose that {\bf A1} holds and let $\{(x^t,y^t,z^t)\}$ be generated from the proximal AMA. Suppose
further that $\beta>0$, $\T\succeq 0$ and $\gamma > 0$ are chosen so that
\begin{enumerate}[{\rm (i)}]
  \item $\Sigma_g+\T+\beta \B^*\B\succ 0$;
  \item for some $\mu > 0$, $2\Sigma_f - (\beta + \mu) \A^*\A\succ 0$ and $\gamma < 1 + \frac{\min\{\beta,\mu\}}{2\beta}$.
\end{enumerate}
Then $\{(x^t,y^t,z^t)\}$
is convergent and the limit satisfies \eqref{genopt}. In particular,
$\{(x^t, y^t)\}$ converges to an optimal solution to \eqref{genprimal} and $\{z^t\}$ converges
to an optimal solution to the dual problem of \eqref{genprimal}.
\end{theorem}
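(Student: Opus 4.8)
The plan is to follow the classical Lyapunov-function argument for ADMM-type methods, adapted to the AMA setting where the first block is strongly convex and updated without a proximal penalty. Fix a triple $(\bar x,\bar y,\bar z)$ satisfying {\bf A1}, which exists by hypothesis. I would first write down the first-order optimality conditions for the $x$- and $y$-subproblems in \eqref{scheme}: from the $x$-update, $\A^*z^t\in\partial f(x^{t+1})$; from the $y$-update, $\B^*z^t-\beta\B^*(\A x^{t+1}+\B y^{t+1}-c)-\T(y^{t+1}-y^t)\in\partial g(y^{t+1})$, which using the $z$-update rewrites as $\B^*z^t+\frac{1}{\gamma}\B^*(z^{t+1}-z^t)-\T(y^{t+1}-y^t)\in\partial g(y^{t+1})$. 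Pairing these with $\A^*\bar z\in\partial f(\bar x)$ and $\B^*\bar z\in\partial g(\bar y)$ and invoking the monotonicity-type inequalities \eqref{strongconvexf} and \eqref{convexg}, I would obtain two inner-product inequalities: $\langle \A^*(z^t-\bar z),x^{t+1}-\bar x\rangle\ge\|x^{t+1}-\bar x\|_{\Sigma_f}^2$ and a companion inequality in the $y$-variables. Adding them and using $\A\bar x+\B\bar y=c$ to replace $\A x^{t+1}+\B y^{t+1}-c$ by $\A(x^{t+1}-\bar x)+\B(y^{t+1}-\bar y)=-\frac{1}{\gamma\beta}(z^{t+1}-z^t)$, I expect to land on an inequality of the form
\begin{equation*}
\langle z^t-\bar z,-\tfrac{1}{\gamma\beta}(z^{t+1}-z^t)\rangle \ge \|x^{t+1}-\bar x\|_{\Sigma_f}^2+\|y^{t+1}-\bar y\|_{\Sigma_g}^2+\langle\T(y^{t+1}-y^t),y^{t+1}-\bar y\rangle-\langle\tfrac{1}{\gamma}(z^{t+1}-z^t),\B(y^{t+1}-\bar y)\rangle.
\end{equation*}

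Next I would turn this into a descent statement for a candidate Lyapunov function such as $\Phi_t:=\frac{1}{\gamma\beta}\|z^t-\bar z\|^2+\|y^t-\bar y\|_\T^2$ (possibly with an extra $\|y^t-y^{t-1}\|_\T^2$ term). The standard algebraic identities $2\langle a-b,a\rangle=\|a\|^2-\|b\|^2+\|a-b\|^2$ are applied to the terms $\langle z^t-\bar z,z^{t+1}-z^t\rangle$ and to the $\T$-inner product $\langle\T(y^{t+1}-y^t),y^{t+1}-\bar y\rangle$, which produces the telescoping differences $\Phi_t-\Phi_{t+1}$ on one side and, on the other side, a collection of quadratic terms in $x^{t+1}-\bar x$, $y^{t+1}-\bar y$, $z^{t+1}-z^t$ and $y^{t+1}-y^t$. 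The crux is to show this residual quadratic form is positive definite (up to the telescoping part) precisely under hypotheses (i) and (ii). The cross term $-\frac{1}{\gamma}\langle z^{t+1}-z^t,\B(y^{t+1}-\bar y)\rangle$ has to be absorbed: I would use $z^{t+1}-z^t=-\gamma\beta(\A x^{t+1}+\B y^{t+1}-c)=-\gamma\beta(\A(x^{t+1}-\bar x)+\B(y^{t+1}-\bar y))$ to re-express it, and then the $\beta\B^*\B$ term from completing the square combines with $\Sigma_g+\T$ to give condition (i), while the $\A^*\A$ terms generated this way, together with a Young's-inequality split of an $\A$-$\B$ cross term using the free parameter $\mu$, must be dominated by $2\Sigma_f$, giving the first half of condition (ii); the constraint $\gamma<1+\frac{\min\{\beta,\mu\}}{2\beta}$ is exactly what makes the coefficient on $\|z^{t+1}-z^t\|^2$ nonnegative after all substitutions.

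Once the inequality $\Phi_{t+1}\le\Phi_t-(\text{nonnegative residual})$ is in hand, the conclusion is routine: $\{\Phi_t\}$ is nonincreasing and bounded below, hence convergent, so $\{z^t\}$ and (if $\T\succ0$, else after an extra argument) $\{\|y^t-\bar y\|_\T\}$ are bounded, and the summable residual forces $x^{t+1}-\bar x\to$ a value consistent with $\Sigma_f\succ0$ pinning down $x^t\to\bar x$, while $\A x^{t+1}+\B y^{t+1}-c\to 0$ and $y^{t+1}-y^t\to 0$. Boundedness of $\{z^t\}$ plus boundedness of $\{y^t\}$ (the latter extracted from condition (i), which controls $y^{t+1}$ through $\|y^{t+1}-y^t\|_\T^2+\beta\|\B(y^{t+1}-y^t)\|^2$ together with the already-controlled $\Sigma_g$ directions, or more directly from the optimality condition and strong monotonicity along the nondegenerate directions) yields a convergent subsequence $(x^{t_k},y^{t_k},z^{t_k})\to(x^\infty,y^\infty,z^\infty)$; passing to the limit in the two subgradient inclusions and in the feasibility equation, using closedness of the subdifferential graphs, shows $(x^\infty,y^\infty,z^\infty)$ satisfies \eqref{genopt}. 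Finally, rerunning the Lyapunov estimate with $(\bar x,\bar y,\bar z)$ replaced by this particular limit point shows $\Phi_t\to 0$ along the full sequence, upgrading subsequential to full convergence. I expect the main obstacle to be the bookkeeping in the second paragraph — correctly choosing $\Phi_t$ (in particular whether the extra $\|y^t-y^{t-1}\|_\T^2$ term is needed) and verifying that conditions (i) and (ii) are exactly, not merely sufficiently, what the quadratic residual demands; handling the case $\T\not\succ0$ (so $\|\cdot\|_\T$ is only a seminorm and does not by itself control $y^t$) is the subtler point and is where hypothesis (i) does the real work.
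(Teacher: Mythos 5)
Your proposal follows essentially the same route as the paper's proof: the same Fej\'{e}r-type Lyapunov function $\frac{1}{\gamma\beta}\|z^t-\bar z\|^2+\|y^t-\bar y\|_\T^2$ (no extra $\|y^t-y^{t-1}\|_\T^2$ term turns out to be needed), the same monotonicity inequalities from the subgradient inclusions, the same Young's-inequality split governed by $\mu$ to produce condition (ii), the same use of $\Sigma_g+\T+\beta\B^*\B\succ 0$ and $2\Sigma_f-(\beta+\mu)\A^*\A\succ 0$ for boundedness, and the same ``rerun the estimate with the subsequential limit as reference point'' step to upgrade to full convergence. The plan is correct and the bookkeeping you flag as the main obstacle works out exactly as you anticipate.
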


\begin{proof}
We first observe from $\Sigma_f \succ 0$ and assumption {(i)} of the theorem that the iterates are all well-defined.
Furthermore, the iterates of the proximal AMA can be equivalently rewritten as follows:
\begin{equation}\label{inclusion}
\left\{
\begin{split}
& 0 \in \partial
f(x^{t+1})-\A^*z^t, \\
& 0 \in \partial
g(y^{t+1})-\B^*\left(z^t-\beta(\A x^{t+1}+\B y^{t+1}-c)\right)+\T(y^{t+1}-y^t),\\
& z^{t+1}=z^t-\gamma\beta(\A x^{t+1}+\B y^{t+1}-c).
\end{split}
\right.
\end{equation}
For notational simplicity, we fix any $(\bar x,\bar y, \bar z)$ satisfying \eqref{genopt} and write $w^t_e:=w^t-\bar{w}$, where $w$ represents $x$, $y$ and $z$ respectively.
From \eqref{inclusion}, \eqref{genopt}, \eqref{strongconvexf} and \eqref{convexg}, we have
\begin{equation}\label{forcite}
\langle \A^*z_e^t,
x_e^{t+1}\rangle \geq \|x_e^{t+1}\|_{\Sigma_f}^2,\ \
\langle \B^*\left(z_e^t-\beta(\A x_e^{t+1}+\B y_e^{t+1})\right)-\T(y^{t+1}-y^t),
y^{t+1}_e\rangle \geq \|y_e^{t+1}\|_{\Sigma_g}^2.
\end{equation}
Summing the above two inequalities and rearranging terms, we obtain
\begin{align*}
\langle z_e^t,
\A x_e^{t+1} + \B y^{t+1}_e\rangle
-\beta\langle \A x_e^{t+1}+\B y_e^{t+1},
\B y^{t+1}_e\rangle-\langle \T(y^{t+1}-y^t),
y^{t+1}_e\rangle \geq \|x_e^{t+1}\|_{\Sigma_f}^2 + \|y_e^{t+1}\|_{\Sigma_g}^2.
\end{align*}
Using the relation $\A x^{t+1}_e+\B y^{t+1}_e=(\gamma\beta)^{-1}(z^t-z^{t+1})$ to the first term on the left, we see that
\[
\frac{1}{\gamma\beta}\langle z_e^t,z^t-z^{t+1}\rangle
-\beta\langle \A x_e^{t+1}+\B y_e^{t+1},
\B y^{t+1}_e\rangle-\langle y^{t+1}-y^t,
y^{t+1}_e\rangle_\T \geq \|x_e^{t+1}\|_{\Sigma_f}^2 + \|y_e^{t+1}\|_{\Sigma_g}^2.
\]
Next, applying the elementary relations $\langle u,v \rangle = \frac{1}{2}(\|u\|^2+\|v\|^2-\|u-v\|^2)$ to the first two terms on the left and $\langle u,v \rangle_\T = \frac{1}{2}(\|u\|^2_\T +\|v\|^2_\T -\|u-v\|^2_\T )$
to the third term on the left, multiplying both sides by a factor of $2$
and rearranging terms, we arrive at
\begin{align}\label{usefulbd}
\begin{split}
&\left(\frac{1}{\gamma\beta}\|z_e^t\|^2 + \|y_e^t\|_\T^2\right) - \left(\frac{1}{\gamma\beta}\|z_e^{t+1}\|^2 + \|y_e^{t+1}\|_\T^2\right)\\
\ge &\  2\|x_e^{t+1}\|_{\Sigma_f}^2 + 2\|y_e^{t+1}\|_{\Sigma_g}^2 + \|y^{t+1}-y^t\|_\T^2 + \beta \|\B y_e^{t+1}\|^2 - \beta\|\A x_e^{t+1}\|^2 + \left(\frac{1}{\gamma^2\beta} - \frac{1}{\gamma\beta}\right)\|z^{t+1} - z^t\|^2\\
= &\  \|x_e^{t+1}\|_{2\Sigma_f-\beta \A^*\A}^2 + 2\|y_e^{t+1}\|_{\Sigma_g}^2 + \|y^{t+1}-y^t\|_\T^2 + \beta \|\B y_e^{t+1}\|^2 + \left(\frac{1}{\gamma^2\beta} - \frac{1}{\gamma\beta}\right)\|z^{t+1} - z^t\|^2\\
= &\  \|x_e^{t+1}\|_{\S}^2 + 2\|y_e^{t+1}\|_{\Sigma_g}^2 + \|y^{t+1}-y^t\|_\T^2 + \mu \|\A x_e^{t+1}\|^2 + \beta \|\B y_e^{t+1}\|^2 + \left(\frac{1}{\gamma^2\beta} - \frac{1}{\gamma\beta}\right)\|z^{t+1} - z^t\|^2\\
\ge &\ \|x_e^{t+1}\|_{\S}^2 + 2\|y_e^{t+1}\|_{\Sigma_g}^2 + \|y^{t+1}-y^t\|_\T^2 + (\beta - \alpha)\|\B y_e^{t+1}\|^2 + \left(\frac{1}{\gamma^2\beta} - \frac{1}{\gamma\beta}\right)\|z^{t+1} - z^t\|^2\\
    &\ \ + \frac{\min\{\alpha,\mu\}}{2}\|\A x^{t+1}_e + \B y_e^{t+1}\|^2\\
= &\ \|x_e^{t+1}\|_{\S}^2 + 2\|y_e^{t+1}\|_{\Sigma_g}^2 + \|y^{t+1}-y^t\|_\T^2 + (\beta - \alpha)\|\B y_e^{t+1}\|^2 + \frac{1}{\gamma^2\beta}\left(\frac{\min\{\alpha,\mu\}}{2\beta} + 1- \gamma\right)\|z^{t+1} - z^t\|^2,
\end{split}
\end{align}
where $\S = 2\Sigma_f-(\beta + \mu) \A^*\A$, $\alpha > 0$
and we used the relation $\A x^{t+1}_e+\B y^{t+1}_e=(\gamma\beta)^{-1}(z^t-z^{t+1})$ in the first inequality
and the last equality. We further choose $\alpha > 0$ in \eqref{usefulbd} so that $\alpha < \beta$ and
\[
\gamma < \frac{\min\{\alpha,\mu\}}{2\beta} + 1;
\]
this is possible due to assumption {(ii)} of the theorem. From this choice and \eqref{usefulbd}, it follows that
the sequence $\{\frac{1}{\gamma\beta}\|z_e^t\|^2 + \|y_e^t\|_\T^2\}$ is monotonically nonincreasing (and so, bounded), and
\begin{equation}\label{padmineq1}
\lim_{t\rightarrow \infty} \|x_e^{t+1}\|_{\S}^2 + \|y_e^{t+1}\|_{\Sigma_g}^2 + \|y^{t+1}-y^t\|_\T^2 + \|\B y_e^{t+1}\|^2 + \|z^{t+1} - z^t\|^2 =0.
\end{equation}
These together with the positive definiteness of $\Sigma_g+\T+\beta \B^*\B$ and $\S$
establish the boundedness of $\{(x^t,y^t,z^t)\}$. It then follows immediately that
there exists a subsequence $\{(x^{t_i},y^{t_i},z^{t_i})\}$ that converges to a limit point
$(x^\diamond, y^\diamond, z^\diamond)$. We next show that $(x^\diamond, y^\diamond, z^\diamond)$ satisfies \eqref{genopt}.

To see this, we first observe from \eqref{padmineq1} and the positive definiteness of $\S$  that
\begin{equation}\label{AST}
\begin{split}
\lim_{t\rightarrow \infty}\|x^{t+1}_e\| = 0, \ \lim_{t\rightarrow \infty} \|y^t_e\|_{\Sigma_g} + \|\B y^t_e\|= 0, \lim_{t\rightarrow \infty}\|y^{t+1}-y^t\|_{\T}=\lim_{t\rightarrow \infty}\|z^{t+1}-z^t\| =0.
\end{split}
\end{equation}
Taking limits on both sides of \eqref{inclusion} along the subsequence $\{(x^{t_i-1},y^{t_i-1},z^{t_i-1})\}$, using
the third relation in \eqref{AST} and the closedness of the graphs of $\partial f$ and $\partial g$ \cite[Page~80]{BoLe06}, we conclude that
\[
\A^*z^\diamond \in \partial f(x^\diamond), \quad \B^*z^\diamond \in \partial g(y^\diamond), \quad \A x^\diamond+\B y^\diamond-c=0,
\]
showing that $(x^\diamond, y^\diamond, z^\diamond)$ satisfies \eqref{genopt}.

To complete the proof, now it remains to show that $(x^\diamond,y^\diamond,z^\diamond)$ is the unique limit of $\{(x^t,y^t,z^t)\}$. Since $(x^\diamond,y^\diamond,z^\diamond)$ satisfies \eqref{genopt}, we could replace $(\bar{x},\bar{y},\bar{z})$ with
$(x^\diamond,y^\diamond,z^\diamond)$ in the above arguments, starting from \eqref{forcite}.
First, the convergence
of $\{x^t\}$ follows immediately from \eqref{AST}.
Next, notice that the subsequence $\{\frac{1}{\gamma\beta}\|z_e^{t_i}\|^2 + \|y_e^{t_i}\|_\T^2\}$ converges to 0 as $i\rightarrow \infty$.
Since this sequence is also non-increasing, we must have
\begin{equation}\label{limit2}
\lim_{t\rightarrow \infty}
\frac{1}{\gamma\beta}\|z_e^t\|^2 + \|y_e^t\|_\T^2=0.
\end{equation}
From this, we see immediately that $\lim_{t\rightarrow \infty}z^t=z^\diamond$. Finally, using \eqref{limit2}, \eqref{AST} and the assumption
that $\Sigma_g +\T+\beta \B^*\B\succ 0$, we obtain further that $\lim_{t\rightarrow \infty}y^t=y^\diamond$. This completes the proof.
\end{proof}

Next, we explore the iteration complexity of the above algorithm. Obviously, one can characterize optimality
of a point $(\hat x,\hat y)$ by
\begin{equation*}
  f(\hat x) + g(\hat y) = f(\bar x) + g(\bar y) \ \ {\rm and}\ \ \|\A \hat x + \B \hat y - c\| = 0,
\end{equation*}
where $(\bar x,\bar y)$ is a solution to \eqref{genprimal}.
In the next theorem, we study the iteration complexity of the proximal AMA by establishing upper bound on $\|\A x + \B y - c\|$ along
a suitable sequence, and upper and lower bounds on the difference between the function value and
the optimal value along the same sequence. Our proof technique is similar to \cite[Lemma~2]{WangBan13} which established bounds similar to \eqref{fvalbd}
and \eqref{constraintbd} for their algorithm.

\begin{theorem}\label{them_complexity}
  Suppose that {\bf A1} holds and let $\{(x^t,y^t,z^t)\}$ be generated from the proximal AMA. Suppose
further that $\beta>0$, $\T\succeq 0$ and $\gamma > 0$ are chosen so that
\begin{enumerate}[{\rm (i)}]
  \item $\Sigma_g+\T+\beta \B^*\B\succ 0$;
  \item for some positive $\mu$, $\delta$ and $\sigma$,
  $2\Sigma_f - (\beta + \mu) \A^*\A\succeq \delta I$ and $\gamma + \sigma \le 1 + \frac{\min\{\beta,\mu\}}{2\beta}$.
\end{enumerate}
  Define
  \begin{equation*}
    (\bar x^N,\bar y^N) = \frac{1}{N}\sum_{t=1}^N (x^t,y^t).
  \end{equation*}
  Then it holds that
  \begin{equation}\label{fvalbd}
  \begin{split}
    &-\frac{\|\bar z\|}{\sqrt{N\sigma\beta}}
    \sqrt{\frac{1}{\gamma\beta}\|z^0 - \bar z\|^2 + \|y^0 - \bar y\|_\T^2} \le f(\bar x^N) + g(\bar y^N) - f(\bar x) - g(\bar y) \\
    &\le
    \frac{1}{2N}\left(\frac{1}{\gamma\beta}\|z^0\|^2 + \|y^0 - \bar y\|_\T^2\right) + \frac{1}{N}\left(\frac{\beta\|\A\|^2}{2\delta}+\frac{\max\{\gamma-1,0\}}{2\sigma}\right)\left(\frac{1}{\gamma\beta}\|z^0 - \bar z\|^2 + \|y^0 - \bar y\|_\T^2\right).
  \end{split}
  \end{equation}
  and
  \begin{equation}\label{constraintbd}
    \|\A \bar x^N + \B \bar y^N - c\| \le \frac{1}{\sqrt{N\sigma\beta}}
    \sqrt{\frac{1}{\gamma\beta}\|z^0 - \bar z\|^2 + \|y^0 - \bar y\|_\T^2},
  \end{equation}
  where $(\bar x,\bar y,\bar z)$ is the limit of $\{(x^t,y^t,z^t)\}$ guaranteed by Theorem~\ref{them_ama}.
\end{theorem}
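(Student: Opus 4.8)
The plan is to prove the three inequalities in turn, reusing the one-step descent estimate \eqref{usefulbd} from the proof of Theorem~\ref{them_ama} to obtain quantitative summability of the primal residuals and of $\{x^t-\bar x\}$, and then exploiting the subgradient inclusions \eqref{inclusion} tested at the optimal triple $(\bar x,\bar y,\bar z)$, which satisfies \eqref{genopt} by Theorem~\ref{them_ama}. Throughout I abbreviate $r^t:=\A x^t+\B y^t-c$, write $w^t_e:=w^t-\bar w$ ($w$ standing for $x$, $y$, $z$), and set $D:=\frac{1}{\gamma\beta}\|z^0-\bar z\|^2+\|y^0-\bar y\|_\T^2$.

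First I would rerun the chain \eqref{usefulbd} with the choice $\alpha=\beta$; then the $(\beta-\alpha)\|\B y^{t+1}_e\|^2$ term disappears, $\|\A x^{t+1}_e+\B y^{t+1}_e\|^2=(\gamma\beta)^{-2}\|z^t-z^{t+1}\|^2=\|r^{t+1}\|^2$, and using $2\Sigma_f-(\beta+\mu)\A^*\A\succeq\delta I$ together with $\frac{\min\{\beta,\mu\}}{2\beta}+1-\gamma\ge\sigma$ from assumption~(ii) I arrive at
\[
\Big(\tfrac{1}{\gamma\beta}\|z^t_e\|^2+\|y^t_e\|_\T^2\Big)-\Big(\tfrac{1}{\gamma\beta}\|z^{t+1}_e\|^2+\|y^{t+1}_e\|_\T^2\Big)\ \ge\ \delta\|x^{t+1}_e\|^2+\tfrac{\sigma}{\gamma^2\beta}\|z^{t+1}-z^t\|^2 .
\]
Summing over $t=0,\dots,N-1$ and discarding the nonnegative tail gives $\sum_{t=1}^N\|x^t_e\|^2\le D/\delta$ and $\sum_{t=1}^N\|z^{t-1}-z^t\|^2\le\gamma^2\beta D/\sigma$; since $z^{t-1}-z^t=\gamma\beta r^t$, the latter reads $\sum_{t=1}^N\|r^t\|^2\le D/(\sigma\beta)$, and convexity of $\|\cdot\|^2$ then yields $\|\A\bar x^N+\B\bar y^N-c\|^2=\big\|\tfrac1N\sum_{t=1}^N r^t\big\|^2\le\tfrac1N\sum_{t=1}^N\|r^t\|^2\le\tfrac{D}{N\sigma\beta}$, which is \eqref{constraintbd}.

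For the lower bound in \eqref{fvalbd}, \eqref{genopt} says $\bar x$ minimizes $x\mapsto f(x)-\langle\bar z,\A x\rangle$ and $\bar y$ minimizes $y\mapsto g(y)-\langle\bar z,\B y\rangle$; adding these and using $\A\bar x+\B\bar y=c$ gives $f(x)+g(y)-f(\bar x)-g(\bar y)\ge\langle\bar z,\A x+\B y-c\rangle$ for all $(x,y)$, and substituting $(\bar x^N,\bar y^N)$, then Cauchy--Schwarz and \eqref{constraintbd}, finishes this side. The upper bound is the substantive part. Writing the first two inclusions of \eqref{inclusion} as subgradient inequalities tested at $\bar x$ and $\bar y$ and adding them, I obtain, with $\hat z^{t+1}:=z^t-\beta r^{t+1}$,
\[
f(x^{t+1})+g(y^{t+1})-f(\bar x)-g(\bar y)\ \le\ \langle z^t,\A x^{t+1}_e\rangle+\langle\hat z^{t+1},\B y^{t+1}_e\rangle+\langle y^{t+1}-y^t,\bar y-y^{t+1}\rangle_\T .
\]
Since $\A x^{t+1}_e+\B y^{t+1}_e=r^{t+1}$ and $z^t=\hat z^{t+1}+\beta r^{t+1}$, the first two inner products collapse to $\langle z^t,r^{t+1}\rangle-\beta\|r^{t+1}\|^2+\beta\langle r^{t+1},\A x^{t+1}_e\rangle$. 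I would then expand $\langle z^t,r^{t+1}\rangle=(\gamma\beta)^{-1}\langle z^t,z^t-z^{t+1}\rangle$ by the three-point identity, peeling off the $\bar z$-component (which telescopes) from the $z_e$-component; bound $\beta\langle r^{t+1},\A x^{t+1}_e\rangle\le\frac{\beta}{2}\|r^{t+1}\|^2+\frac{\beta\|\A\|^2}{2}\|x^{t+1}_e\|^2$; and split the $\T$-term via $\langle u,v\rangle_\T=\frac12(\|u\|_\T^2+\|v\|_\T^2-\|u-v\|_\T^2)$ into a telescoping part and a nonpositive remainder. The three $\|r^{t+1}\|^2$ contributions combine to $\frac{\beta(\gamma-1)}{2}\|r^{t+1}\|^2$. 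Summing over $t=0,\dots,N-1$, telescoping, using $\|z^0_e\|^2-\|z^N_e\|^2+2\langle\bar z,z^0-z^N\rangle=\|z^0\|^2-\|z^N\|^2\le\|z^0\|^2$, discarding nonpositive terms, and feeding in $\sum\|r^t\|^2\le D/(\sigma\beta)$ and $\sum\|x^t_e\|^2\le D/\delta$ from the previous step to bound the residual and cross contributions, I reach $\sum_{t=1}^N(f(x^t)+g(y^t))-N(f(\bar x)+g(\bar y))\le\frac12\big(\frac{1}{\gamma\beta}\|z^0\|^2+\|y^0-\bar y\|_\T^2\big)+\big(\frac{\beta\|\A\|^2}{2\delta}+\frac{\max\{\gamma-1,0\}}{2\sigma}\big)D$; dividing by $N$ and applying Jensen's inequality ($f(\bar x^N)\le\frac1N\sum f(x^t)$, likewise for $g$) gives the right inequality in \eqref{fvalbd}.

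The main obstacle is the bookkeeping in this last step: one must track the several $\|r^{t+1}\|^2$ contributions so that they assemble with the correct sign into $\frac{\beta(\gamma-1)}{2}$ — nonpositive exactly when $\gamma\le1$, and otherwise absorbed into the residual sum at the cost of the $\frac{\max\{\gamma-1,0\}}{2\sigma}$ term — and one must choose the Young split of the $\A$-cross term so that summation produces precisely $\frac{\beta\|\A\|^2}{2\delta}$. This is also where the strengthened form of assumption~(ii) (the $\succeq\delta I$ and the slack $\sigma$, replacing the strict conditions of Theorem~\ref{them_ama}) is essential: it is exactly what converts the telescoped descent inequality into the quantitative bounds $\sum\|x^t_e\|^2\le D/\delta$ and $\sum\|r^t\|^2\le D/(\sigma\beta)$ driving both the $O(1/N)$ rate and the $1/\delta$, $1/\sigma$ dependence of the constants.
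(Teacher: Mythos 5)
Your proposal is correct and follows essentially the same route as the paper: the one-step estimate \eqref{usefulbd} with $\alpha=\beta$ yields exactly the paper's bounds \eqref{bdAx} and \eqref{bdz} on $\sum\|x^t-\bar x\|^2$ and $\sum\|\A x^t+\B y^t-c\|^2$, your lower bound and \eqref{constraintbd} are obtained exactly as in the paper, and your upper-bound derivation (subgradient inequalities from \eqref{inclusion} tested at $(\bar x,\bar y)$, telescoping in $\|z^t\|^2$ and $\|y^t-\bar y\|_\T^2$, then Jensen) is the same computation as the paper's \eqref{bdineq1}--\eqref{bdineq2}, merely organized around $z^t$ and $\hat z^{t+1}$ rather than around $z^{t+1}$ and the Lagrangian $\ell$. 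All the constants, including the coefficient $\tfrac{\beta(\gamma-1)}{2}$ of $\|r^{t+1}\|^2$ and the factors $\tfrac{\beta\|\A\|^2}{2\delta}$ and $\tfrac{\max\{\gamma-1,0\}}{2\sigma}$, come out identically.
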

\begin{proof}
  As before, let $(\bar x,\bar y,\bar z)$ be the limit of $\{(x^t,y^t,z^t)\}$,
  and write $w^t_e:=w^t-\bar w$, where $w$ represents $x$, $y$ and $z$ respectively. Then it holds that
  \begin{equation}\label{bdineq1}
  \begin{split}
    &\ \langle \A^*(z^{t+1} - z^t), x^{t+1}_e\rangle + \langle \T(y^{t+1} - y^t) + (1-\gamma)\beta\B^*(\A x_e^{t+1} + \B y_e^{t+1}), y^{t+1}_e\rangle \\
    = &\ \frac{1}{\gamma\beta}\langle z^{t+1} - z^t, \gamma\beta\A x^{t+1}_e\rangle + \langle y^{t+1} - y^t, y^{t+1}_e\rangle_\T
    + (1-\gamma)\beta\langle \A x_e^{t+1} + \B y_e^{t+1}, \B y^{t+1}_e\rangle\\
    = &\ \frac{1}{2\gamma \beta}\|z^{t+1} - z^t + \gamma \beta\A x^{t+1}_e\|^2 - \frac{1}{2\gamma\beta}\|z^{t+1} - z^t\|^2
    - \frac{\gamma\beta}{2}\|\A x^{t+1}_e\|^2\\
      &\ + \frac{1}{2}\|y^{t+1} - y^t\|_\T^2 + \frac{1}{2}\|y^{t+1}_e\|_\T^2 - \frac{1}{2}\|y^{t}_e\|_\T^2
      + \frac{(1-\gamma)\beta}{2}\left(\|\A x_e^{t+1} + \B y_e^{t+1}\|^2 + \|\B y_e^{t+1}\|^2 - \|\A x_e^{t+1}\|^2\right)\\
    = & \ \frac{\beta}{2}\|\B y^{t+1}_e\|^2 - \frac{1}{2\gamma\beta}\|z^{t+1} - z^t\|^2
    - \frac{\beta}{2}\|\A x^{t+1}_e\|^2\\
      &\ + \frac{1}{2}\|y^{t+1} - y^t\|_\T^2 + \frac{1}{2}\|y^{t+1}_e\|_\T^2 - \frac{1}{2}\|y^{t}_e\|_\T^2
      +\frac{(1-\gamma)\beta}{2}\|\A x_e^{t+1} + \B y_e^{t+1}\|^2\\
    = & \ \frac{\beta}{2}\|\B y^{t+1}_e\|^2 + \frac{1}{2\gamma\beta}\left(\frac{1 -\gamma}{\gamma} - 1\right)\|z^{t+1} - z^t\|^2
    - \frac{\beta}{2}\|\A x^{t+1}_e\|^2 + \frac{1}{2}\|y^{t+1} - y^t\|_\T^2 + \frac{1}{2}\|y^{t+1}_e\|_\T^2 - \frac{1}{2}\|y^{t}_e\|_\T^2\\
  \end{split}
  \end{equation}
  where we made use of the elementary relations $\langle u,v \rangle = \frac{1}{2}(\|u\|^2+\|v\|^2-\|u-v\|^2) = \frac{1}{2}(\|u + v\|^2-\|u\|^2-\|v\|^2)$
  and $\langle u,v \rangle_\T = \frac{1}{2}(\|u\|_\T^2+\|v\|_\T^2-\|u-v\|_\T^2)$
  in the second equality, the fact that $-\A \bar x  = \B \bar y - c$ in the third equality and the definition of $z^{t+1}$ in the last equality.
  Moreover, note from \eqref{inclusion} that the iterates of the proximal AMA satisfy the following relations:
  \begin{equation}\label{thm2optcon}
  \left\{
    \begin{split}
      &-\A^*(z^{t+1} - z^t) \in \partial f(x^{t+1}) - \A^* z^{t+1},\\
      &-(1-\gamma)\beta\B^*(\A x_e^{t+1} + \B y_e^{t+1})-\T(y^{t+1} - y^t)\in \partial g(y^{t+1}) - \B^* z^{t+1},\\
      &-\frac{1}{\gamma\beta}(z^{t+1} - z^t) = \A x^{t+1} + \B y^{t+1} - c.
    \end{split}
  \right.
  \end{equation}
  Using this, \eqref{bdineq1}, and the convexity of the Lagrangian function
  $\ell(x,y,z):= f(x) + g(y)-\langle z, \A x + \B y - c\rangle$ in the variables $x$ and $y$, we obtain further that
  \begin{equation*}
    \begin{split}
     &\ \frac{1}{2\gamma\beta}\left(\frac{1 -\gamma}{\gamma} - 1\right)\|z^{t+1} - z^t\|^2
    - \frac{\beta}{2}\|\A x^{t+1}_e\|^2 + \frac{1}{2}\|y^{t+1}_e\|_\T^2 - \frac{1}{2}\|y^{t}_e\|_\T^2\\
     \le &\ \langle -\A^*(z^{t+1} - z^t), \bar x - x^{t+1}\rangle + \langle - (1-\gamma)\beta\B^*(\A x_e^{t+1} + \B y_e^{t+1})-\T(y^{t+1} - y^t), \bar y - y^{t+1}\rangle\\
     \le &\ \ell(\bar x,\bar y,z^{t+1}) - \ell(x^{t+1},y^{t+1},z^{t+1}) = f(\bar x) + g(\bar y) - f(x^{t+1}) - g(y^{t+1}) + \langle z^{t+1},\A x^{t+1}_e + \B y^{t+1}_e\rangle\\
     = &  f(\bar x) + g(\bar y) - f(x^{t+1}) - g(y^{t+1}) +
     \frac{1}{2\gamma\beta}\|z^t\|^2 - \frac{1}{2\gamma\beta}\|z^{t+1}\|^2 - \frac{1}{2\gamma\beta}\|z^{t+1} - z^t\|^2,
    \end{split}
  \end{equation*}
  where we made use of the third relation in \eqref{thm2optcon} and the relation $\langle u,v \rangle = \frac{1}{2}(\|u + v\|^2-\|u\|^2-\|v\|^2)$
  in the last equality.

  Rearranging terms in the above inequality and summing from $t=0$ to $N-1$, we obtain that
  \begin{equation}\label{bdineq2}
    \sum_{t=0}^{N-1}\left(f(x^{t+1}) + g(y^{t+1}) - f(\bar x) - g(\bar y)\right) \le \frac{1}{2}\left(\frac{1}{\gamma\beta}\|z^0\|^2 + \|y_e^0\|_\T^2\right) + \frac{\beta}{2}\sum_{t=0}^{N-1}\|\A x^{t+1}_e\|^2 + \frac{\gamma-1}{2\gamma^2\beta}\sum_{t=0}^{N-1}\|z^{t+1} - z^t\|^2.
  \end{equation}
  To derive an upper bound for $\sum_{t=0}^{N-1}\|\A x^{t+1}_e\|^2$, observe from $\S := 2\Sigma_f - (\beta + \mu) \A^*\A\succeq \delta I$
  and \eqref{usefulbd} that
  \begin{equation*}
  \|x^{t+1}_e\|^2 \le \frac{1}{\delta}\|x^{t+1}_e\|^2_{\S} \le  \frac{1}{\delta}\left(\frac{1}{\gamma\beta}\|z_e^t\|^2 + \|y_e^t\|_\T^2\right) - \frac{1}{\delta}\left(\frac{1}{\gamma\beta}\|z_e^{t+1}\|^2 + \|y_e^{t+1}\|_\T^2\right);
  \end{equation*}
  and thus
  \begin{equation}\label{bdAx}
    \sum_{t=0}^{N-1}\|\A x^{t+1}_e\|^2 \le \|\A\|^2 \sum_{t=0}^{N-1}\|x^{t+1}_e\|^2 \le \frac{\|\A\|^2}{\delta}\left(\frac{1}{\gamma\beta}\|z_e^0\|^2 + \|y_e^0\|_\T^2 - \frac{1}{\gamma\beta}\|z_e^N\|^2 - \|y_e^N\|_\T^2\right)
  \end{equation}
  On the other hand, using the definition of $\sigma$ and choosing $\alpha = \beta$ in \eqref{usefulbd}, we obtain that
  \begin{equation*}
    \|z^{t+1} - z^t\|^2 \le \frac{\gamma^2\beta}{\sigma}\left(\frac{1}{\gamma\beta}\|z_e^t\|^2 + \|y_e^t\|_\T^2\right) - \frac{\gamma^2\beta}{\sigma}\left(\frac{1}{\gamma\beta}\|z_e^{t+1}\|^2 + \|y_e^{t+1}\|_\T^2\right),
  \end{equation*}
  which implies that
  \begin{equation}\label{bdz}
    \sum_{t=0}^{N-1}\|z^{t+1} - z^t\|^2 \le \frac{\gamma^2\beta}{\sigma}\left(\frac{1}{\gamma\beta}\|z_e^0\|^2 + \|y_e^0\|_\T^2\right) - \frac{\gamma^2\beta}{\sigma}\left(\frac{1}{\gamma\beta}\|z_e^{N}\|^2 + \|y_e^{N}\|_\T^2\right)
  \end{equation}
  Combining \eqref{bdAx} and \eqref{bdz} with \eqref{bdineq2} and the convexity of $f$ and $g$, the upper bound in \eqref{fvalbd} follows immediately.
  Furthermore, \eqref{constraintbd} follows immediately from \eqref{bdz}, the definition of $z^{t+1}$ and the convexity of norm function.
  Finally, the lower bound in \eqref{fvalbd} follows from \eqref{constraintbd} and the fact that
  \[
  f(x) + g(y) - \langle\bar z,\A x + \B y - c\rangle \ge f(\bar x) + g(\bar y),
  \]
  for all $x$, $y$, since $\bar z$ is a Lagrange multiplier for \eqref{genprimal}.
\end{proof}

The proximal AMA is closely related to the proximal ADMM, which has been studied extensively recently in the literature; see,
for example, \cite{Eckstein94,FPST12,HLHY02,XuW2011,YaZ09,ZBO09}. In essence, the latter algorithm is
also applicable to solve \eqref{genprimal} and is obtained by replacing the first subproblem in \eqref{scheme} with
\begin{equation}\label{subproblemADMM}
  x^{t+1}=\argmin_{x}\left\{f(x)-\langle z^t, \A x\rangle + \frac{\beta}{2}\|\A x + \B y^{t} - c\|^2 + \frac{1}{2}\|x - x^t\|^2_{\cal S}\right\},
\end{equation}
for some positive semidefinite linear map ${\cal S}$. In \cite[Theorem~B.1]{FPST12}, this algorithm is shown to converge
under conditions similar to those in Theorem~\ref{them_ama}, but has no restrictions in $\beta > 0$,
and allows an extra freedom in picking the stepsize in the $z$-update to be $\gamma\beta$
with $\gamma\in (0,\frac{\sqrt{5}+1}{2})$.

In the case when the proximal mapping of $\frac{1}{\beta}f$ is easy to compute,
the subproblem \eqref{subproblemADMM} for the proximal ADMM could be easy
to solve with a suitable choice of $\cal S$. Due to the extra freedom in choosing parameters, it is conceivable that
the proximal ADMM will work better than the proximal AMA.
As a consequence, for AMA, we do not consider the case when the proximal mapping of $\frac{1}{\beta}f$ is easy to compute
and thus did not add proximal term to the first subproblem of the proximal AMA.

\section{The proximal-proximal gradient algorithm}\label{sec4}

In this section, we apply the proximal AMA \eqref{scheme} developed in Section~\ref{sec:AMA} to \eqref{fencheldual}, or equivalently, \eqref{FBS}.
To this end, letting $\tau \ge \beta \|\M^*\M\|$ so that $\T:=\tau \I - \beta \M\M^*\succeq 0$, we replace
the second subproblem in \eqref{AMAsubproblem} by
\begin{equation}\label{choiceT}
y^{t+1} = \argmin_y \left\{ P^*(y) + \langle b,y\rangle - \langle{z^t}, \M^*y\rangle + \frac{\beta}{2}\|x^{t+1} + \M^*y\|^2 + \frac{1}{2}\|y - y^t\|^2_\T \right\},
\end{equation}
where $\|\cdot\|_\T$ is the semi-norm induced by the positive semidefinite linear map $\T$. Notice that the optimization
problem for this new subproblem has a unique solution due to the choice of $\T$. In particular, the iterates for this new algorithm are all well-defined, i.e., all the subproblems that arise have (unique) minimizers.

We next discuss how the subproblems of this new algorithm as applied to \eqref{FBS} (or, equivalently,
the proximal AMA as applied to \eqref{fencheldual}) can be solved efficiently.
From the first-order optimality condition, the first subproblem in \eqref{AMAsubproblem} amounts to finding $x^{t+1}$ so that
$z^t\in \partial h^*(x^{t+1})$, which can be obtained by setting $x^{t+1} = \nabla h(z^t)$ by \cite[Theorem~23.5]{Roc70}. Moreover, the second subproblem \eqref{choiceT} can
be reformulated as
\begin{align*}
  y^{t+1} & =
  \argmin_y \left\{ P^*(y) + \langle b,y\rangle - \langle{z^t}, \M^*y\rangle + \frac{\beta}{2}\|x^{t+1} + \M^*y\|^2 + \frac{1}{2}\|y - y^t\|^2_\T \right\}\\
          & = \argmin_y \left\{ P^*(y) + \langle b - \M z^t + \beta \M x^{t+1} + \beta \M\M^* y^t, y\rangle + \frac{\tau}{2}\|y - y^t\|^2 \right\}\\
          & = \argmin_y \left\{ P^*(y) + \frac{\tau}{2}\left\|y - \left(y^t - \frac{b - \M z^t + \beta \M x^{t+1} + \beta \M \M^* y^t}{\tau}\right)\right \|^2 \right\}\\
          & = {\rm prox}_{\tau^{-1}P^*}\left(\frac{\T y^t - b + \M z^t - \beta \M x^{t+1}}{\tau}\right),
\end{align*}
where $\T = \tau \I - \beta \M\M^*$.
Since the proximal mapping of $\tau P$ is easy to compute, the same is true of $\tau^{-1}P^*$, due to \eqref{proxmap}.
Precisely, we have
\begin{align}\label{alternate_yupdate}
  y^{t+1}& = \frac{\T y^t - b + \M z^t - \beta \M x^{t+1}}{\tau} -
 \argmin_u\left\{ P(\tau u) + \frac{\tau}{2}\left\|u - \frac{\T y^t - b + \M z^t - \beta \M x^{t+1}}{\tau}\right\|^2\right\}\nonumber \\
 & = \tau^{-1}\left(\T y^t - b + \M z^t - \beta \M x^{t+1} -
\argmin_v\left\{ P(v) + \frac{1}{2\tau}\left\|v - (\T y^t - b + \M z^t - \beta \M x^{t+1})\right\|^2\right\}\right)\nonumber\\
 & = \tau^{-1}\left(\T y^t - b + \M z^t - \beta \M x^{t+1} -
{\rm prox}_{\tau P}(\T y^t - b + \M z^t - \beta \M x^{t+1})\right).
\end{align}
Thus, the second subproblem \eqref{choiceT} can also be efficiently solved.

We note that the only difference between this new algorithm and the proximal gradient algorithm \eqref{FBSsubproblem} (which,
we recall, is equivalent to \eqref{AMAsubproblem})
lies in the second subproblem in \eqref{AMAsubproblem} (and also the choice of stepsizes $\beta>0$ and $\gamma > 0$),
where we simplified the subproblem by adding a proximal term. With our choice of $\T = \tau \I - \beta\M\M^*$, the subproblem
is reduced to a computation of the proximal mapping of $\frac{1}{\tau}P^*$. Hence,
we shall also refer to this new algorithm as the proximal-proximal gradient (PPG) algorithm.

We now summarize the PPG algorithm for solving \eqref{FBS} as follows:

\vspace{.1in}
\fbox{\parbox{5.6 in}{
\begin{description}
\item {\bf PPG algorithm for \eqref{FBS}}

\item[Step 0.] Input $(y^0, z^0)\in \Y\times \Z$, $\beta \in (0,\frac{2}{L})$, $\gamma\in (0,1 + \min\{\frac{1}{2},\frac{1}{\beta L}-\frac{1}{2}\})$,
$\tau \ge \beta \|\M^*\M\|$ and $\T = \tau \I - \beta \M\M^*$.

\item[Step 1.] Set
\begin{equation*}
\left\{
\begin{split}
&y^{t+1} = {\rm prox}_{\tau^{-1}P^*}\left(\frac{\T y^t - b + \M z^t - \beta \M\nabla h(z^t)}{\tau}\right),\\
& z^{t+1} = z^t - \gamma\beta (\nabla h(z^t) + \M^* y^{t+1}).
\end{split}
\right.
\end{equation*}

\item[Step 2.] If a termination criterion is not met, go to Step 1.
\end{description}
}}
\vspace{.1in}

We have the following convergence result concerning this new algorithm, which is a consequence of Theorem~\ref{them_ama}.
\begin{theorem}\label{them_ppg}
  Let $\{(y^t,z^t)\}$ be generated from the PPG algorithm for solving \eqref{FBS}
  and define $x^{t+1} = \nabla h(z^t)$ for all $t\ge 0$. Then
$\{(x^t, y^t)\}$ converges to an optimal solution to \eqref{fencheldual} and $\{z^t\}$ converges
to an optimal solution to \eqref{FBS}.
\end{theorem}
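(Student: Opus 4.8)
\emph{Proof proposal.} The plan is to exhibit the PPG algorithm as an instance of the proximal AMA \eqref{scheme} applied to the Fenchel dual \eqref{fencheldual}, to check that all the hypotheses of Theorem~\ref{them_ama} hold for this instance, and then to transfer its conclusion back to \eqref{FBS} by duality. Concretely, I would match \eqref{fencheldual} against \eqref{genprimal} via $f=h^*$, $g=P^*(\cdot)+\langle b,\cdot\rangle$, $\A=\I$, $\B=\M^*$ and $c=0$. With these identifications, the first subproblem of \eqref{scheme} is $x^{t+1}=\argmin_x\{h^*(x)-\langle z^t,x\rangle\}$, whose unique minimizer is $x^{t+1}=\nabla h(z^t)$ by \cite[Theorem~23.5]{Roc70} (the equivalence already recorded in \eqref{xupdateAMA}); the second subproblem of \eqref{scheme} with the choice $\T=\tau\I-\beta\M\M^*$ is precisely \eqref{choiceT}, and the computation in \eqref{alternate_yupdate} shows it coincides with the prox-form $y$-update of the PPG algorithm; and the $z$-updates agree. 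Hence the sequence $\{(x^t,y^t,z^t)\}$ with $x^{t+1}:=\nabla h(z^t)$ is exactly the one produced by the proximal AMA on this instance.

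Next I would verify the structural assumptions of Section~\ref{sec:AMA}. Since $\nabla h$ is Lipschitz with modulus at most $L$, the conjugate $h^*$ is strongly convex with modulus at least $\frac{1}{L}$ (cf. the discussion preceding \eqref{AMAsubproblem}), so \eqref{strongconvexf} holds with $\Sigma_f=\frac{1}{L}\I\succ0$; and $P^*(\cdot)+\langle b,\cdot\rangle$ is convex, so \eqref{convexg} holds with $\Sigma_g=0$. For \textbf{A1}, I would start from the fact that \eqref{FBS} has an optimal solution $\bar z$ and use the first-order optimality condition together with the constraint qualification \eqref{CQ} and the subdifferential calculus rules of \cite{Roc70} (as in the ``conversely'' argument of Section~\ref{sec2}) to obtain $\bar y\in\partial P(\M\bar z-b)$ with $\nabla h(\bar z)+\M^*\bar y=0$. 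Setting $\bar x=\nabla h(\bar z)$ and applying \cite[Theorem~23.5]{Roc70} twice, these relations become $\bar z\in\partial h^*(\bar x)$, $\M\bar z-b\in\partial P^*(\bar y)$ and $\bar x+\M^*\bar y=0$; reading $\partial g(\bar y)=\partial P^*(\bar y)+b$ and $\B^*\bar z=\M\bar z$, this is exactly \eqref{genopt} for the present instance, so \textbf{A1} holds.

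It then remains to check conditions (i) and (ii) of Theorem~\ref{them_ama}. For (i), $\B^*\B=\M\M^*$, so $\Sigma_g+\T+\beta\B^*\B=\tau\I\succ0$ because $\tau\ge\beta\|\M^*\M\|>0$ (note $\M\neq0$). For (ii), $2\Sigma_f-(\beta+\mu)\A^*\A=\left(\frac{2}{L}-\beta-\mu\right)\I$, which is positive definite for every $\mu$ in the nonempty interval $\left(0,\frac{2}{L}-\beta\right)$ (nonempty since $\beta<\frac{2}{L}$); maximizing $\min\{\beta,\mu\}$ over this interval shows that the stated restriction $\gamma<1+\min\left\{\frac{1}{2},\frac{1}{\beta L}-\frac{1}{2}\right\}$ in Step~0 of the PPG algorithm is precisely what guarantees an admissible $\mu$ with $\gamma<1+\frac{\min\{\beta,\mu\}}{2\beta}$. (Well-definedness of the iterates, also invoked in the proof of Theorem~\ref{them_ama}, follows from $\Sigma_f\succ0$ and (i).) Applying Theorem~\ref{them_ama} then yields that $\{(x^t,y^t)\}$ converges to an optimal solution of \eqref{genprimal}, i.e.\ of \eqref{fencheldual}, and $\{z^t\}$ converges to an optimal solution of the dual of \eqref{fencheldual}. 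Finally, a short Lagrangian-dual computation using the biconjugacy relation \eqref{biconjugate} (so $h^{**}=h$ and $P^{**}=P$) identifies the dual of \eqref{fencheldual} as the problem $\max_z\{-F(z)\}$, whose maximizers are exactly the optimal solutions of \eqref{FBS}; hence $\{z^t\}$ converges to an optimal solution of \eqref{FBS}, which is the claim.

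I expect the only genuinely nontrivial step to be the verification of \textbf{A1}: one must carefully invoke Fenchel duality under \eqref{CQ} to produce a triple satisfying \eqref{genopt}, in particular handling the linear term $\langle b,\cdot\rangle$ sitting inside $g$ and the corresponding subdifferential of a sum. The remaining steps — matching the two descriptions of the $y$-update and checking the parameter inequalities (i)--(ii) against the ranges prescribed in the PPG algorithm — are routine bookkeeping.
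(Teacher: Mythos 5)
Your proposal is correct and follows essentially the same route as the paper: reduce the PPG algorithm to the proximal AMA applied to \eqref{fencheldual} with $f=h^*$, $g=P^*(\cdot)+\langle b,\cdot\rangle$, $\A=\I$, $\B=\M^*$, $c=0$, verify {\bf A1} from an optimal solution of \eqref{FBS} via \eqref{CQ} and \cite[Theorem~23.5]{Roc70}, take $\Sigma_f=\frac{1}{L}\I$ and $\Sigma_g=0$, and invoke Theorem~\ref{them_ama}. Your explicit check of conditions (i)--(ii) (that $\Sigma_g+\T+\beta\B^*\B=\tau\I\succ0$ and that the stated range of $\gamma$ yields an admissible $\mu\in(0,\frac{2}{L}-\beta)$) is left implicit in the paper but is exactly the intended bookkeeping.
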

\begin{proof}
  Recall that $\{(x^t,y^t,z^t)\}$ thus generated is the same as the sequence obtained from the proximal AMA
  for solving \eqref{fencheldual}, using the same initial points, $\beta$, $\gamma$ and $\T$.

  We first show that {\bf A1} holds for \eqref{fencheldual}.
  To this end, let $\bar z$ be a solution of \eqref{FBS}. Since $h$ is continuous and differentiable,
  from the first-order optimality condition, \eqref{CQ} and the subdifferential calculus rules \cite[Theorem~23.8]{Roc70}, \cite[Theorem~23.9]{Roc70}, we have
  \[
  0 \in \nabla h(\bar z) + \M^*\partial P(\M\bar z - b).
  \]
  Hence, there exists $\bar y \in \partial P(\M\bar z - b)$ and $\bar x := \nabla h(\bar z)$ so that $\bar x + \M^* \bar y = 0$. Furthermore,
  notice from \cite[Theorem~23.5]{Roc70} that
  \[
  \bar y \in \partial P(\M\bar z - b) \Rightarrow \M\bar z - b\in \partial P^*(\bar y) \ \ {\rm and}\ \ \bar x = \nabla h(\bar z) \Rightarrow \bar z \in \partial h^*(\bar x).
  \]
  We thus conclude that \eqref{genopt} is satisfied with $h^*(x)$ in place of $f(x)$, $P^*(y) + \langle b,y\rangle$ in place of $g(y)$,
  the identity map $\I$ in place of $\A$, $\M^*$
  in place of $\B$ and $0$ in place of $c$.

  Next, recall from \cite[Theorem~2.1.5]{Nes03} that for any $z_1$, $z_2\in \Z$, we have
  \begin{equation*}
    \langle \nabla h(z_1) - \nabla h(z_2),z_1 - z_2 \rangle \ge \frac{1}{L}\|z_1-z_2\|^2,
  \end{equation*}
  since $\nabla h$ is Lipschitz continuous with modulus bounded by $L$.
  Hence $h^*$ satisfies \eqref{strongconvexf} in place of $f$ with $\Sigma_f = \frac{1}{L}\I$.

  The conclusion of the theorem now follows immediately from a direct application of Theorem~\ref{them_ama}, with $\Sigma_f = \frac{1}{L}\I$
  and $\Sigma_g = 0$.
\end{proof}

\begin{remark}
Unlike the proximal gradient algorithm \eqref{FBSsubproblem},
the sequence $\{\M z^t - b\}$ generated using $\{z^t\}$ from the above algorithm does not necessarily lie
in ${\rm dom}\,P$. This could be a disadvantage since then the objective function $F$ of \eqref{FBS} is not necessarily defined at each $z^t$.
However, when $\gamma = 1$, notice from \eqref{choiceT} that
\begin{equation}\label{domP}
\begin{split}
  &0 \in  \partial P^*(y^{t+1}) + \underbrace{b - \M z^t + \beta \M(x^{t+1} + \M^*y^{t+1})}_{b - \M z^{t+1}} + \T(y^{t+1} - y^t) \\
  \Rightarrow \ \ & y^{t+1} \in \partial P(\M z^{t+1} - b - \T(y^{t+1} - y^t)).
\end{split}
\end{equation}
Hence, in the case when $\gamma=1$ and $\M$ is surjective so that $\T = \M\cR$ for some linear map $\cR$, one can define
\[
\tilde z^{t+1} := z^{t+1} - \cR(y^{t+1} - y^t).
\]
Then we see immediately from \eqref{domP} that $\{\M\tilde z^t-b\}\subseteq {\rm dom}\,P$. Furthermore, since $\{y^t\}$ is convergent by Theorem~\ref{them_ppg},
it follows that $\lim_{t\rightarrow \infty} \tilde z^t = \lim_{t\rightarrow \infty} z^t$. Thus, the sequence $\{\tilde z^t\}$ is a feasible minimizing sequence for \eqref{FBS}.
\end{remark}

The PPG algorithm takes a particularly simple form when $h(z) = \frac{1}{2}\|z - \bar z\|^2$
for some $\bar z\in \Z$, as is illustrated in the next example.

\begin{example}
  Fix any $\bar z \in Z$ and consider the following optimization problem:
  \begin{equation}\label{projectK}
    \begin{array}{rl}
      \min\limits_z & \frac{1}{2}\|z - \bar z\|^2 + P(\M z - b),
    \end{array}
  \end{equation}
  which is a special case of \eqref{FBS} with $h(z) = \frac{1}{2}\|z - \bar z\|^2$.
  Since $\nabla h(z) = z - \bar z$, it is easy to see that one can take
  $L = 1$. Now, taking $\beta = \gamma = 1$, $\tau \ge \|\M^*\M\|$ and $\T = \tau \I- \M\M^*$ in the PPG algorithm, it is routine to show that
  \begin{equation}\label{projsub}
  \left\{
  \begin{split}
    &y^{t+1} ={\rm prox}_{\tau^{-1}P^*}\left(\frac{(\tau \I - \M\M^*)y^t + \M \bar z - b}{\tau}\right),\\
    &z^{t+1} = \bar z - \M^* y^{t+1};
  \end{split}
  \right.
  \end{equation}
  notice that the above updating rules are
  independent of $z^t$. In particular, one can completely ignore the $z$-update in the course of the algorithm.
  Furthermore, if $\M$ is surjective with $\T = \M\cR$ for some linear map $\cR$, one
  can obtain a feasible sequence $\{\tilde z^t\}$ converging to the optimal solution of \eqref{projectK} as
  \begin{equation*}
    \tilde z^{t+1} = \bar z - \M^*y^{t+1} - \cR(y^{t+1} - y^t).
  \end{equation*}

  On the other hand, one can show that the $y$-update in \eqref{projsub} is exactly
  the updating rule obtained by applying the proximal gradient algorithm \eqref{FBSsubproblem}
  to the dual of \eqref{projectK}. Indeed, according to \eqref{fencheldual}, the dual of \eqref{projectK} is given by
  \begin{equation}\label{projectKdual}
    \begin{array}{rl}
      \min\limits_{y} & \underbrace{\frac{1}{2}\|\M^* y\|^2 - \langle \M\bar z - b,y\rangle}_{h_2(y)} + P^*(y).
    \end{array}
  \end{equation}
  Notice that $\tau$ is an upper bound of the Lipschitz continuity modulus of the gradient of the smooth part $h_2(y)$ of the objective function in \eqref{projectKdual}. Thus, the updating rule \eqref{FBSsubproblem} applied to \eqref{projectKdual} yields the iterates
  \begin{equation*}
    y^{t+1} = {\rm prox}_{\tau^{-1}P^*}\left( y^t - \frac{1}{\tau}(\M\M^*y^t - \M\bar z + b)\right),
  \end{equation*}
  which is just the $y$-update in \eqref{projsub}.
\end{example}
%

We illustrate the PPG algorithm when $\M^*\M = \I$ or $\M\M^* = \I$ in the next example.
\begin{example}
Suppose that $\M^*\M = \I$ so that $\|\M^*\M\| = 1$. One can then take $\tau = \beta$ and $\gamma = 1$ in
the PPG algorithm. Using the definition of $z^{t+1}$, \eqref{alternate_yupdate}
and introducing the auxiliary iterates $u^t$ and $w^t$,
one can easily verify that the iterates of the algorithm can be represented as
\begin{equation*}
\left\{
\begin{split}
&w^{t+1} = \T y^t - b + \M z^t - \beta \M\nabla h(z^t),\ \ u^{t+1} = {\rm prox}_{\tau P}(w^{t+1}),\\
& z^{t+1} = \M^*\left(u^{t+1} + b\right),\ \ y^{t+1} = \frac{1}{\tau}(w^{t+1} - u^{t+1}).
\end{split}
\right.
\end{equation*}
On the other hand, if $\M\M^* = \I$, then setting $\tau = \beta$ will give $\T = 0$. Hence, no proximal term is added
to the alternating minimization algorithm when applied to the dual problem \eqref{fencheldual}, and
the PPG algorithm (with $\tau = \beta = \frac{1}{L}$ and $\gamma = 1$) for \eqref{FBS} reduces to the proximal gradient algorithm \eqref{FBSsubproblem}.
\end{example}

We consider the sum of several functions with simple proximal mappings in the next example.
\begin{example}
  Consider the following optimization problem:
  \begin{equation}\label{sumproblem}
    \begin{array}{rl}
      \min\limits_z & \displaystyle h(z) + \sum_{i=1}^mP_i(z),
    \end{array}
  \end{equation}
  where the proximal mappings for $\tau P_i$, $\tau > 0$ and $i=1,...,m$ are all easy to compute. The problem \eqref{sumproblem}
  is readily written in the form of \eqref{FBS} and hence the PPG algorithm is applicable. More precisely, define
  $\M z = (z,z,...,z) \in \Z^m$ and $P(u_1,...,u_m) = \sum_{i=1}^m P(u_i)$. Then we have $\sum_{i=1}^mP_i(z) = P(\M z)$.
  Moreover, the proximal mapping of $\tau P$, $\tau > 0$, is easy to compute.
\end{example}

Thanks to Theorem~\ref{them_complexity}, one can obtain various complexity results for the PPG algorithm
by using different choices of parameters. Here is one such result.

\begin{corollary}\label{cor_ppgcomplexity}
  Let $\{(y^t,z^t)\}$ be generated from the PPG algorithm for solving \eqref{FBS} with $\beta = \frac{1}{L}$,
  $\tau = \beta \|\M^*\M\|$ and $\gamma = 1$.
  Define $x^{t+1} = \nabla h(z^t)$ for all $t\ge 0$ and
  \begin{equation*}
    (\bar x^N,\bar y^N) = \frac{1}{N}\sum_{t=1}^N (x^t,y^t).
  \end{equation*}
  Then it holds that
  \begin{equation*}
  \begin{split}
    &-2\|\bar z\|\sqrt{\frac{L}{N}}
    \sqrt{L\|z^0 - \bar z\|^2 + \|y^0 - \bar y\|_{\tau \I - \beta\M\M^*}^2} \le h^*(\bar x^N) + P^*(\bar y^N) - h^*(\bar x) - P^*(\bar y) \\
    &\le
    \frac{1}{2N}\left(L\|z^0\|^2 + \|y^0 - \bar y\|_{\tau \I - \beta\M\M^*}^2\right) + \frac{1}{N}\left(L\|z^0 - \bar z\|^2 + \|y^0 - \bar y\|_{\tau \I - \beta\M\M^*}^2\right).
  \end{split}
  \end{equation*}
  and
  \begin{equation*}
    \|\A \bar x^N + \B \bar y^N - c\| \le 2\sqrt{\frac{L}{N}}
    \sqrt{L\|z^0 - \bar z\|^2 + \|y^0 - \bar y\|_{\tau \I - \beta\M\M^*}^2},
  \end{equation*}
  where $(\bar x,\bar y,\bar z)$ is the limit of $\{(x^t,y^t,z^t)\}$ guaranteed by Theorem~\ref{them_ppg}.
\end{corollary}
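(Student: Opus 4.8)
The plan is to obtain Corollary~\ref{cor_ppgcomplexity} as a direct specialization of Theorem~\ref{them_complexity}, so the work is entirely one of identification and bookkeeping rather than new analysis. Recall from the proof of Theorem~\ref{them_ppg} that the triple $\{(x^t,y^t,z^t)\}$ (with $x^{t+1}=\nabla h(z^t)$) is exactly the proximal AMA sequence for \eqref{fencheldual} under the identifications $f=h^*$, $g=P^*(\cdot)+\langle b,\cdot\rangle$, $\A=\I$, $\B=\M^*$, $c=0$, $\T=\tau\I-\beta\M\M^*$, with $\Sigma_f=\frac{1}{L}\I$ and $\Sigma_g=0$; moreover Assumption~{\bf A1} for \eqref{fencheldual} was verified there. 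Thus I only need to check hypotheses (i) and (ii) of Theorem~\ref{them_complexity} for $\beta=\frac{1}{L}$, $\tau=\beta\|\M^*\M\|$, $\gamma=1$, and then plug in.

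For (i): since $\B^*\B=\M\M^*$ and $\Sigma_g=0$, we have $\Sigma_g+\T+\beta\B^*\B=\tau\I-\beta\M\M^*+\beta\M\M^*=\tau\I\succ0$ (here $\tau>0$ because $\M\ne0$). For (ii): choose $\mu=\frac{1}{2L}$, so that $2\Sigma_f-(\beta+\mu)\A^*\A=\left(\frac{2}{L}-\frac{1}{L}-\frac{1}{2L}\right)\I=\frac{1}{2L}\I$, and one may take $\delta=\frac{1}{2L}$; also $\min\{\beta,\mu\}=\mu=\frac{1}{2L}$, so $1+\frac{\min\{\beta,\mu\}}{2\beta}=1+\frac14=\frac54$, whence $\gamma+\sigma=1+\sigma\le\frac54$ holds with $\sigma=\frac14$. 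With these choices, $\gamma=1$ forces $\frac{1}{\gamma\beta}=L$, $\max\{\gamma-1,0\}=0$, $\|\A\|=\|\I\|=1$, $\frac{\beta\|\A\|^2}{2\delta}=\frac{1/L}{1/L}=1$, and $\frac{1}{\sqrt{N\sigma\beta}}=\sqrt{4L/N}=2\sqrt{L/N}$.

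Substituting all of the above (with $\T=\tau\I-\beta\M\M^*$) into \eqref{fvalbd} and \eqref{constraintbd} gives the three displayed inequalities: the $L$-factors come from $\frac{1}{\gamma\beta}$, the leading $\frac{1}{N}$ coefficient on the second bracket in the upper bound becomes $1$, and the $2\sqrt{L/N}$ prefactor in the lower bound and the feasibility bound comes from $\frac{1}{\sqrt{N\sigma\beta}}$. The only delicate point is that $g$ carries the affine term $\langle b,\cdot\rangle$: it leaves $\Sigma_g$ unchanged (so (i)--(ii) are unaffected), and its contribution to the function-value gap is handled by writing $g(\bar y^N)-g(\bar y)$ in terms of $P^*$, the extra term being linear and hence harmless for the bracketed quadratic estimates.

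Since the substance sits entirely in Theorem~\ref{them_complexity}, there is no real analytic obstacle; the only care needed is to pick $(\mu,\delta,\sigma)$ consistently so that hypothesis (ii) holds \emph{and} the resulting constants match exactly the displayed ones. The fortunate feature that makes this clean is the choice $\gamma=1$, which kills the $\max\{\gamma-1,0\}$ term and makes $\frac{1}{\gamma\beta}=L$.
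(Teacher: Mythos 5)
Your proposal is correct and follows essentially the same route as the paper: the paper's proof likewise invokes Theorem~\ref{them_complexity} with $\mu=\delta=\frac{1}{2L}$ and $\sigma=\min\{\frac12,\frac{\mu L}{2}\}=\frac14$, and reads off the constants exactly as you do. Your verification of hypothesis (i) and your remark about the affine term $\langle b,\cdot\rangle$ in $g$ are in fact slightly more explicit than the paper's own two-line argument.
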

\begin{proof}
  Recall that the PPG algorithm in this case is just the proximal AMA applied to \eqref{fencheldual} with
  $\beta = \frac{1}{L}$ and $\gamma = 1$. Due to the choice of $\gamma$ and $\beta$, we see that
  $\delta$ can be chosen to be $\frac{1}{2L}$ (and thus $\mu=\frac{1}{2L}$, where $\mu$ is defined as in Theorem~\ref{them_complexity}) and $\sigma$
  can be chosen to be $\min\{\frac{1}{2},\frac{\mu L}{2}\} = \frac{1}{4}$ in Theorem~\ref{them_complexity}. The conclusion
  now immediately follows from Theorem~\ref{them_complexity}.
\end{proof}

\paragraph{Relationship with inexact proximal gradient algorithms.}
The PPG algorithm can be interpreted as a variant of an {\em inexact} proximal gradient algorithm.
Recall that an inexact proximal gradient algorithm is an algorithm in which the subproblem \eqref{FBSsubproblem} is only solved
inexactly in each iteration, and convergence is guaranteed if each subproblem \eqref{FBSsubproblem} is solved up to a certain accuracy;
see, for example, \cite{Patriksson98,SchRouBach11}.

To describe such an interpretation, we start by presenting one approach to solve the subproblem \eqref{FBSsubproblem} via an iterative method.
In this approach, we first notice that \eqref{FBSsubproblem} can be equivalently written as
\begin{align}
   &\min_z P(\M z - b) + \frac{L}{2}\left\|z - \left(z^t -\frac{1}{L}\nabla h(z^t)\right)\right\|^2\label{FBSsubproblemdualmin}\\
   = &\min_z \max_y \left\{\frac{L}{2}\left\|z - \left(z^t -\frac{1}{L}\nabla h(z^t)\right)\right\|^2 + \langle y,\M z - b\rangle - P^*(y)\right\}\nonumber\\
   \ge &\max_y \min_z \left\{\frac{L}{2}\left\|z - \left(z^t -\frac{1}{L}\nabla h(z^t)\right)\right\|^2 + \langle y,\M z - b\rangle - P^*(y)\right\}\nonumber\\
   = &\max_y \min_z -\frac{1}{2L}\|\M^* y\|^2 + \langle \M \hat z^t - b,y\rangle + \frac{L}{2}\left\|z - \hat z^t + \frac{1}{L}\M^* y\right\|^2 - P^*(y)\label{FBSsubproblemattain}\\
   = &\max_y \underbrace{-\frac{1}{2L}\|\M^* y\|^2 + \langle \M \hat z^t - b,y\rangle}_{-h_3(y)} - P^*(y)\label{FBSsubproblemdualmax},
\end{align}
where $\hat z^t := z^t -\frac{1}{L}\nabla h(z^t)$,
the first equality follows from the definition of convex conjugate of $P$ and \eqref{biconjugate}.
Furthermore, from \eqref{CQ} and \cite[Theorem~31.2]{Roc70},
we see that \eqref{FBSsubproblemdualmin} indeed equals \eqref{FBSsubproblemdualmax}. This implies that equality holds throughout in
the above relations. Thus, in order to solve for \eqref{FBSsubproblemdualmin}, one can equivalently solve for \eqref{FBSsubproblemdualmax}.
Moreover, it follows from \eqref{FBSsubproblemattain} that one can recover the optimal solution of \eqref{FBSsubproblemdualmin}
from an optimal solution $\tilde y^{t+1}$ of \eqref{FBSsubproblemdualmax} by setting
\begin{equation}\label{idealzupdate}
z^{t+1} = z^t - \frac{1}{L}(\nabla h(z^t) + \M^* \tilde y^{t+1}).
\end{equation}

We thus discuss an algorithm for solving \eqref{FBSsubproblemdualmax} instead.
Recall that the proximal mapping of $\tau^{-1}P^*$ is easy to compute for $\tau > 0$, assuming that of $\tau P$, $\tau > 0$,
is easy to compute; see \eqref{proxmap} and the discussions leading to \eqref{alternate_yupdate}. Hence, the maximization problem in \eqref{FBSsubproblemdualmax}
can be efficiently solved using the proximal gradient algorithm. In an inexact proximal gradient algorithm based on this scheme,
one solves \eqref{FBSsubproblemdualmax} sufficiently accurately by the proximal gradient algorithm and updates $z^{t+1}$ according to \eqref{idealzupdate},
using the approximate solution in place of $\tilde y^{t+1}$.

However, instead of getting a sufficiently accurate solution for \eqref{FBSsubproblemdualmax}, suppose we go to the other extreme:
apply {\em one step} of the proximal gradient algorithm to \eqref{FBSsubproblemdualmax}, initialized at $y^t$. Then, the resulting point
$y^{t+1}$ is given by
\begin{equation}\label{approxyupdate}
  y^{t+1} = {\rm prox}_{\tau^{-1}P^*}\left(y^t - \frac{\beta \M\M^* y^t - \M z^t + \beta \M\nabla h(z^t) + b}{\tau}\right)
\end{equation}
where $\beta = \frac{1}{L}$ and $\tau \ge \beta\|\M^*\M\|$, the Lipschitz continuity modulus of the gradient of $h_3$ in \eqref{FBSsubproblemdualmax}. Notice that the PPG algorithm with $\gamma = 1$
just consists of \eqref{approxyupdate} and \eqref{idealzupdate}, with $\tilde y^{t+1}$ replaced by $y^{t+1}$. In this sense,
the PPG algorithm can be viewed as an inexact proximal gradient algorithm with an (very) inaccurately solved subproblem, where we only take
{\em one step} of the proximal gradient algorithm to solve the dual of the subproblem.
This relationship between the PPG algorithm and the proximal gradient algorithm
resembles the relationship between the linearized Bregman iterative algorithm
and the Bregman iterative algorithm (see, for example, \cite[Section~5.3]{YODG08}), and that between
proximal ADMM and ADMM (see, for example, \cite[Section~2]{YaZ09}).

On the other hand,
this approach is fundamentally different from those considered in \cite{SchRouBach11}
which require the subproblem to be solved up to a certain accuracy. Hence, their convergence results do not directly apply to our algorithm.
The result in \cite[Section~4.2]{Patriksson98} is closer in spirit to our approach.
However, unlike there, in our algorithm, $z^{t+1} - z^t$ is
not a descent direction for $F(z)$ in general and no line search is needed in our algorithm.
Hence, their convergence results also do not directly apply to our algorithm.

\paragraph{Relationship with an algorithm proposed in \cite{Condat13}.}
The algorithms proposed in \cite{Condat13} apply to a more general objective function and allow inexact computation of gradients
and proximal mappings; see \cite{Vu13} for a further generalization of the algorithms into the settings of
finding zeroes for monotone operators. When restricted to solving our problem \eqref{FBS} using exact gradient and proximal mapping computations
and constant stepsize in extrapolation, one of their algorithms becomes quite similar to the PPG algorithm. More precisely,
that algorithm in this special case reduces to
\begin{equation}\label{Condatalg}
\left\{
\begin{split}
&z^{t+1} = z^t - \beta (\nabla h(z^t) + \M^* y^{t}),\\
&y^{t+1} = {\rm prox}_{\tau^{-1}P^*}\left(y^t - \frac{2\beta \M\M^* y^t - \M z^t + 2\beta \M\nabla h(z^t) + b}{\tau}\right),\\
&(z^{t+1},y^{t+1}) = \gamma(z^{t+1},y^{t+1}) + (1 - \gamma)(z^t,y^t).
\end{split}
\right.
\end{equation}
Although the formulae for updating the variables are similar to our algorithm,
there are quite a number of differences. First,
their order of updating the variables are different, and the extrapolation
is done for {\em both} variables. Moreover,
they use $\tau \I - 2\beta \M\M^*$ to scale $y^t$ instead of $\tau \I - \beta\M\M^*$.
Convergence result is given in \cite[Theorem~3.1]{Condat13}, where $\beta$ and $\tau$ have to be chosen
so that
\[
\frac{1}{\beta} - \frac{\|\M^*\M\|}{\tau}\ge \frac{L}{2}\ \ {\rm and}\ \
0<\gamma< 2 - \frac{L}{2}\left(\frac{1}{\beta} - \frac{\|\M^*\M\|}{\tau}\right)^{-1}.
\]
Thus, while $\beta$
can still be chosen from $(0,\frac{2}{L})$, their $\tau$ has to be chosen so that $\tau \ge \frac{\beta\|\M^*\M\|}{1 - \frac{\beta L}{2}}$,
which can be considerably larger than our choice of $\tau$ especially when $\beta$ is large.
Furthermore, unlike our algorithm where the maximum possible $\gamma$ depends {\em only} on $\beta$ and $L$,
it depends also on the choice of $\tau$ for their algorithm: the smaller the $\tau$, the closer the maximum possible $\gamma$ is to $1$,
meaning that the effect of extrapolation becomes smaller.
In view of
these differences, it does not seem that this algorithm proposed in \cite{Condat13} is equivalent to our algorithm.

\section{Numerical results}\label{sec5}

In this section, we perform numerical experiments to illustrate the performance of our algorithm. We consider two applications:
the system realization problem modeled via nuclear norm minimization \cite[Section~II.B]{LiV09} and the fused lasso logistic regression problem \cite[Section~5]{MaZhang13}.
Our codes are written in MATLAB. All numerical experiments are performed on an SGI XE340 system, with two 2.4 GHz
quad-core Intel E5620 Xeon 64-bit CPUs and 48 GB RAM, equipped with SUSE Linux Enterprise server 11
SP1 and MATLAB 7.14 (R2012a). All routines are timed using the tic-toc function in MATLAB.

\subsection{System realization problem}

As was formulated in \cite[Section~II.B]{LiV09} (see also \cite[Section~5]{FPST12}),
the system realization problem modeled as a nuclear norm minimization problem takes the following form:
\begin{equation}\label{sysreal}
  \min_{z\in \R^{m\times n(j+k-1)}} p_{\rm sys}(z) = \frac{1}{2}\|w\circ z - w\circ \hat z\|^2 + \lambda \|\cH(z)\|_*,
\end{equation}
where $z = \begin{pmatrix}
  z_0 & z_1 & \cdots & z_{j+k-2}
\end{pmatrix}$ with each $z_i\in \R^{m\times n}$, $\hat z$ is the given measurement, $w= \begin{pmatrix}
  w_0 & w_1 & \cdots & w_{j+k-2}
\end{pmatrix}\in \R^{m\times n(j+k-1)}$ is a zero-one matrix that are ones for the blocks with $i=0,...,k-1$ and is zero otherwise,
$\circ$ denotes the Hadamard (entry-wise) product, $\|\cdot\|$ denotes the Fr\"{o}benius norm (the norm induced by the trace inner product on $\R^{m\times n}$),
$\lambda > 0$ is the regularization parameter, $\|\cdot\|_*$ denotes the nuclear norm (the sum of all singular values), and
$\cH(z)$ is a block Hankel matrix defined as
\begin{equation*}
  \cH(z) = \begin{pmatrix}
    z_0&z_1&\cdots&z_{k-1}\\
    z_1&z_2&\cdots&z_{k}\\
    \vdots&\vdots&&\vdots\\
    z_{j-1}&z_{j}&\cdots&z_{j+k-2}
  \end{pmatrix}\in \R^{mj\times nk}.
\end{equation*}
It is easy to see that \eqref{sysreal} is in the form of \eqref{FBS} with
\begin{equation*}
\begin{split}
h(z) = \frac{1}{2}\|w\circ z - w\circ \hat z\|^2,\ P(u) = \lambda \|u\|_*,\
\M = \cH,\ b = 0.
\end{split}
\end{equation*}
From these we see that the condition \eqref{CQ} is trivially satisfied. Moreover,
as mentioned in the introduction, the proximal mapping of $\tau P$, $\tau > 0$, is easy to compute.
Furthermore, since $z\mapsto \|\cH(z)\|_*$ is coercive,
the set of optimal solutions of \eqref{sysreal} is nonempty. Hence, all three assumptions on \eqref{FBS} are satisfied for the specific problem \eqref{sysreal}.
For ease of reference, we also write down the dual of \eqref{sysreal} as follows:
\begin{equation}\label{sysrealdual}
\begin{array}{rl}
  \min\limits_{\nu,y} & d_{\rm sys}(\nu):= \frac{1}{2}\|\nu\|^2 + \langle w\circ \hat z,\nu\rangle \\
  {\rm s.t.} & w\circ \nu + \cH^*(y) = 0,\ \|y\|_\infty\le \lambda,
\end{array}
\end{equation}
where $\|y\|_\infty$ denotes the operator norm of $y\in \R^{mj\times nk}$, where $\nu$ is related to the $x$ in \eqref{fencheldual} via $x = w\circ \nu$.

In this subsection, we will perform numerical experiments to compare our PPG algorithm with the MFBS method \eqref{extragradient}
and the primal ADMM2 in \cite[Section~5.1]{FPST12} for solving \eqref{sysreal}. To apply our algorithm,
we have to determine a bound $L$ for the Lipschitz continuity modulus of $\nabla h$,
and obtain a bound of $\|\M^*\M\|$. It is easy to see one can take $L=1$.
Furthermore, it follows from \cite[Section~2]{FPST12} that $\|\cH^*\cH\| \le \min\{j,k\}$.
On the other hand, to apply the MFBS method \eqref{extragradient}, we need a bound $L_{\M}$ of the Lipschitz continuity modulus
of $G(z,y)$. By \cite[Section~6.2]{MonteiroSvaiter11},
$L_\M$ can be chosen to be $\frac{1}{2}(1 + \sqrt{1 + 4\min\{j,k\}})$. Finally, we refer the readers to \cite[Section~5.1]{FPST12} for
details about the primal ADMM2.

We initialize all three algorithms at the origin, i.e., $(y^0,z^0)= (0,0)$. We terminate the algorithms
when
\begin{equation}\label{termsys}
  \max\left\{\frac{|\min_{s\in \Gamma}p_{\rm sys}(z^s) + d_{\rm sys}(-w\circ \cH^*({\rm proj}_{\Omega}(y^t)))|}{\max\{\min_{s\in \Gamma}p_{\rm sys}(z^s),1\}},
  \frac{5\|\cH^*({\rm proj}_{\Omega}(y^t)) - w\circ \cH^*({\rm proj}_{\Omega}(y^t))\|}{\max\{\|\cH^*({\rm proj}_{\Omega}(y^t))\|,1\}}\right\} < tol
\end{equation}
for some $tol > 0$, where ${\rm proj}_{\Omega}$ denotes the projection onto the set $\Omega := \{y:\; \|y\|_\infty \le \lambda\},$\footnote{For
the MFBS method, we used $(u^t,v^t)$ in place of $(z^t,y^t)$ in \eqref{termsys}.
On the other hand, the projection in \eqref{termsys} is only performed for the primal ADMM2.}
the set $\Gamma := \{s:\;1\le s\le t, \mbox{ $s$ is a multiple of $10$}\}$, and the above criterion \eqref{termsys} is checked every $10$ iterations.
In our test below, we take $tol = 1e-4$ for all three algorithms. Also,
for our PPG algorithm, we take $\tau = \beta\min\{j,k\}$, $\gamma = 1 + 0.95\min\{0.5,\frac{1}{\beta L}-0.5\}$,
with $\beta = \frac{1}{L} = 1$ for $\lambda = 0.05$ and $\beta = 0.05$ otherwise,\footnote{While it is intuitively obvious
that for a fixed $\beta$, one should take $\gamma$ as large as possible and $\tau$ as small as possible so as to maximize
the stepsizes for updating $z$ and $y$, respectively, the situation is not so clear for the choice of $\beta$.
This is because $\beta$ is directly proportional to the stepsize in the $z$-update but inverse proportional to the stepsize in the $y$-update.
Thus, we experimented with $\beta = \frac{0.05}{L}$, $\frac{1}{L}$ and $\frac{1.95}{L}$. Setting
$\beta = \frac{0.05}{L}$ seems to work best for $\lambda \ge 0.1$; while for $\lambda = 0.05$, setting $\beta = \frac{1}{L}$ works best.}
while for the MFBS method
\eqref{extragradient}, we take $L_{\M} = \frac{1}{2}(1 + \sqrt{1 + 4\min\{j,k\}})$ and $\sigma=0.95$. We use the same parameters as used in
\cite[Section~5.1]{FPST12} for the primal ADMM2.

We generate random instances as in \cite[Section~II(B)]{LiV09}.
We start by generating random matrices $A\in \R^{r\times r}$, $B\in \R^{r\times n}$ and $C\in \R^{n\times r}$ with i.i.d. standard Gaussian entries
and normalize them to have operator norm $1$. We then randomly generate a $v_0\sim N(0,I)$ and $e_t\sim N(0,I)$ for $t=0,...,T-1$
with i.i.d. standard Gaussian entries, and create an ``output" $\tilde u_t$, $t=0,...,T-1$, according to the state space model:
\begin{align*}
  v_{t+1}&=Av_t+Be_t,\\
  \tilde u_t&=Cv_t+e_t.
\end{align*}
Random noise is then added to the output $\tilde u$ to give
$
\hat u = \tilde u + \sigma \epsilon,
$
with $\epsilon$ having i.i.d. standard Gaussian entries.
Finally, we set, for each $i=0,...,k-1$,
\begin{equation*}
  \hat z_i = \frac{1}{T}\sum_{t=0}^{T-1-i}\hat u_{t+i}\hat u_{t}^T,
\end{equation*}
and $\hat z_i=0$ for $i\ge k$.

In the test below, we fix $T=1000$, $m = n = 10$, $r = 10$, $j = 21$ and $\sigma = 5\times 10^{-2}$.
For each $k=100$, $200$ and $300$, and $\lambda = 0.05$, $0.1$ and $0.5$, we generate $10$ random instances as described above. The
computational results, averaged over the $10$ instances, are reported in Table~\ref{t1}, where we report the number of iterations ({\bf iter}), CPU time in seconds ({\bf cpu}),
primal objective value \eqref{sysreal} at termination ({\bf pobj}), dual objective value \eqref{sysrealdual} at termination ({\bf dobj})
and the relative dual infeasibility ({\bf dfeas})
\[\frac{\|\cH^*({\rm proj}_{\Omega}(y^t)) - w\circ \cH^*({\rm proj}_{\Omega}(y^t))\|}{\max\{\|\cH^*({\rm proj}_{\Omega}(y^t))\|,1\}}.\]
We observe that the primal ADMM2 is usually the fastest, and our PPG algorithm is usually faster than the MFBS method.

\begin{table}[h!]
\caption{\small Results for PPG algorithm, primal ADMM2 and MFBS method on solving \eqref{sysreal}}
 \label{t1}
\hspace{-1.8 cm}
\begin{footnotesize}
\begin{tabular}{|c c||r r r|r r r|r r r|}
\hline
 &  & \multicolumn{3}{c|}{\bf PPG} &
\multicolumn{3}{c|}{\bf P. ADMM2}&
\multicolumn{3}{c|}{\bf MFBS}\\
\hline
$k$ & \multicolumn{1}{c||}{$\lambda$} & \multicolumn{1}{c}{\bf iter} &
\multicolumn{1}{c}{\bf cpu}& \multicolumn{1}{c|}{\bf pobj/dobj/dfeas}
& \multicolumn{1}{c}{\bf iter} &
\multicolumn{1}{c}{\bf cpu}& \multicolumn{1}{c|}{\bf pobj/dobj/dfeas}
& \multicolumn{1}{c}{\bf iter} &
\multicolumn{1}{c}{\bf cpu}& \multicolumn{1}{c|}{\bf pobj/dobj/dfeas}\\
%

\hline

100 & 0.05 & 123 & 7.4  & 6.073e+0/6.072e+0/4.3e-6 & 40 & 2.7 & 6.073e+0/6.073e+0/9.6e-6 & 108 & 7.3  & 6.073e+0/6.073e+0/1.5e-5   \\
100 & 0.10 & 82  & 4.7  & 7.419e+0/7.419e+0/1.7e-5 & 78 & 5.1 & 7.419e+0/7.419e+0/4.2e-7 & 299 & 19.4 & 7.419e+0/7.419e+0/1.9e-6  \\
100 & 0.50 & 58  & 3.8  & 1.180e+1/1.180e+1/6.3e-6 & 21 & 1.5 & 1.180e+1/1.180e+1/4.2e-8 & 97  & 6.9  & 1.180e+1/1.180e+1/4.8e-6    \\
200 & 0.05 & 41  & 4.8  & 1.014e+1/1.014e+1/1.1e-5 & 40 & 5.2 & 1.014e+1/1.014e+1/5.2e-6 & 191 & 24.3 & 1.014e+1/1.014e+1/1.9e-5  \\
200 & 0.10 & 100 & 11.7 & 1.288e+1/1.288e+1/1.7e-5 & 55 & 7.3 & 1.288e+1/1.288e+1/3.9e-6 & 177 & 22.8 & 1.288e+1/1.288e+1/4.9e-6  \\
200 & 0.50 & 51  & 5.9  & 1.756e+1/1.755e+1/5.5e-6 & 20 & 2.5 & 1.755e+1/1.755e+1/4.2e-9 & 93  & 12.1 & 1.756e+1/1.755e+1/2.7e-6   \\
300 & 0.05 & 30  & 5.0  & 1.259e+1/1.259e+1/1.4e-5 & 38 & 7.0 & 1.259e+1/1.259e+1/6.4e-6 & 224 & 43.2 & 1.259e+1/1.259e+1/1.9e-5  \\
300 & 0.10 & 156 & 28.7 & 1.768e+1/1.768e+1/1.8e-5 & 43 & 9.1 & 1.768e+1/1.767e+1/1.3e-5 & 95  & 20.1 & 1.768e+1/1.767e+1/1.2e-5   \\
300 & 0.50 & 53  & 8.8  & 2.253e+1/2.253e+1/3.9e-6 & 20 & 3.6 & 2.253e+1/2.253e+1/2.3e-9 & 111 & 20.9 & 2.253e+1/2.253e+1/2.0e-6  \\

\hline
\end{tabular}
\end{footnotesize}
\end{table}

\subsection{Fused lasso logistic regression problem}

As discussed in \cite[Section~5]{MaZhang13}, the fused lasso logistic regression problem can be presented as follows:
\begin{equation}\label{fusedlasso}
  \min_{z\in \R^n} p_{\rm flasso}(z):=\sum_{i=1}^m \log(1 + \exp(-b_i(a_i^T\breve{z} + z_n))) + \lambda_1 \sum_{i=1}^{n-1}|z_i| + \lambda_2\sum_{i=1}^{n-2}|z_{i+1} - z_i|,
\end{equation}
where $a_i\in \R^m$ are samples, $b_i\in \{-1,1\}$, $i=1,...,m$, $m < n$, $\lambda_1\ge0$ and $\lambda_2\ge0$ are regularization parameters, $z\in \R^n$, $\breve{z}\in \R^{n-1}$ is the vector that contains the first $n-1$ entries of $z$ and $z_i$ denotes the $i$th entry of $z$.
It is easy to see that \eqref{fusedlasso} is in the form of
\eqref{FBS} with
\begin{equation*}
\begin{split}
h(z) = \sum_{i=1}^m \log(1 + \exp(-b_i(a_i^T\breve{z} + z_n))),\ \ \ \ \ \ \ \ \ \ \ \ \ \ \
\\ P(u) = \lambda_1\sum_{i=1}^{n-1}|u_i| + \lambda_2\sum_{i=n}^{2n-3}|u_i|,\
\M = M:= \begin{pmatrix}
  I_{n-1} & 0\\ E_{n-2} & 0
\end{pmatrix},\ b = 0,
\end{split}
\end{equation*}
where $M$ is a $(2n-3)\times n$ matrix,
$I_{n-1}$ is the identity matrix of dimension $n-1$ and $E_{n-2}$ is the $(n-2)\times (n-1)$ matrix whose diagonal entries are $1$,
upper diagonal entries are $-1$, and is zero otherwise. To further simplify notations, we define a matrix $A$ whose $i$th row is given by
$\begin{pmatrix}
  -b_ia_i^T & -b_i
\end{pmatrix}$ and a function $l(v):= \sum_{i=1}^m \log(1 + \exp(v_i))$. Then $h(z) = l(Az)$.
Moreover, it is routine to show that the conjugate function of $l$ is given by
\begin{equation*}
  l^*(u) = \sum_{i=1}^m\left(u_i\log(u_i) + (1-u_i)\log(1-u_i)\right),
\end{equation*}
with domain $0\le u_i\le 1$ for all $i$. Hence, the dual problem \eqref{fencheldual} of \eqref{fusedlasso} is given by
\begin{equation}\label{fusedlassodual}
\begin{array}{rl}
  \min\limits_{\nu,y} & d_{\rm flasso}(\nu):=\sum_{i=1}^m \left(\nu_i\log(\nu_i) + (1-\nu_i)\log(1-\nu_i)\right)\\
  {\rm s.t.} & A^T \nu + M^T y = 0,\ \max\limits_{1\le i\le n-1}|y_i| \le \lambda_1,\ \max\limits_{n\le i\le 2n-3}|y_i| \le \lambda_2,
\end{array}
\end{equation}
where $A^T$ and $M^T$ are the transpose of the matrices $A$ and $M$, respectively, and $\nu$ is related to the $x$ in \eqref{fencheldual} via $x = A^T\nu$.
Since $m < n$, under a reasonable assumption that $A$ has full row rank, we have $\nu = (A^T)^\dagger x$, where $^\dagger$ denotes the pseudoinverse.
First, as mentioned in the introduction, the proximal mapping of $\tau P$, $\tau > 0$, is easy to compute.
Moreover, since
\eqref{CQ} is trivially satisfied for \eqref{fusedlasso}, there is a zero duality gap between the primal problem
\eqref{fusedlasso} and the dual problem \eqref{fusedlassodual}.
From the dual problem, it is not hard to observe
that if $b_i$ are not all the same and both $\lambda_1$ and $\lambda_2$ are positive,
then the generalized Slater condition is satisfied for \eqref{fusedlassodual} and
an optimal solution to \eqref{fusedlasso} exists by \cite[Corollary~28.2.2]{Roc70}.
Hence, all assumptions on \eqref{FBS} are satisfied for the specific problem \eqref{fusedlasso},
under the reasonable assumptions that $b_i$ are not all the same (meaning that there are samples from the two different classes)
and both $\lambda_1$ and $\lambda_2$ are positive.

In this subsection, we will perform numerical experiments to compare our PPG algorithm against the MFBS method \eqref{extragradient} on solving \eqref{fusedlasso}. To apply our algorithm, we have to upper-bound the Lipschitz continuity modulus of $\nabla h$
and $\|\M^*\M\|$. To this end, it is routine to show that $\nabla l$ is Lipschitz continuous with modulus bounded by $0.25$. It then follows immediately from $h(z) = l(Az)$
that $\nabla h$ is Lipschitz continuous with modulus bounded by $0.25\lambda_{\max}(A^TA)$,
the maximum eigenvalue of $A^TA$. Next,
notice that
\begin{equation*}
  M^TM = \begin{pmatrix}
    2&-1&0&\cdots&\cdots&0&0\\
    -1&3&-1&\cdots&\cdots&0&0\\
    0&-1&3&\cdots&\cdots&0&0\\
    \vdots&\ddots&\ddots&\ddots&\ddots&\ddots&\vdots\\
    0&\cdots&\cdots&-1&3&-1&0\\
    0&\cdots&\cdots&0&-1&2&0\\
    0&\cdots&\cdots&0&0&0&0
  \end{pmatrix}\preceq
  \begin{pmatrix}
    3&-1&0&\cdots&\cdots&0&0\\
    -1&3&-1&\cdots&\cdots&0&0\\
    0&-1&3&\cdots&\cdots&0&0\\
    \vdots&\ddots&\ddots&\ddots&\ddots&\ddots&\vdots\\
    0&\cdots&\cdots&-1&3&-1&0\\
    0&\cdots&\cdots&0&-1&3&0\\
    0&\cdots&\cdots&0&0&0&0
  \end{pmatrix}=: M_2,
\end{equation*}
while the maximum eigenvalue of $M_2$ is bounded above by $5$; see, for example, \cite[Page~525]{Higham02}.
Hence, we have $\|\M^*\M\|\le 5$. On the other hand, to apply the MFBS method \eqref{extragradient}, we need to bound the Lipschitz continuity modulus
of $G(z,y)$. Using \cite[Lemma~6.2]{MonteiroSvaiter11} and the bound $\|\M^*\M\|\le 5$, we see that $L_\M$ can be chosen to be
$\frac{1}{2}(0.25\lambda_{\max}(A^TA) + \sqrt{(0.25\lambda_{\max}(A^TA))^2 + 20})$.

For simplicity, we initialize both algorithms at the origin, i.e., $(y^0,z^0)= (0,0)$. We terminate the algorithms
when
\begin{equation}\label{termflasso}
  \max\left\{\frac{|\min_{s\in {\Gamma}}p_{\rm flasso}(z^s) + d_{\rm flasso}(\tilde \nu^t)|}{\max\{\min_{s\in {\Gamma}}p_{\rm flasso}(z^s),1\}},
  \frac{5\|A^T\tilde \nu^t + M^T y^t \|}{\max\{\|A^T\tilde \nu^t\|,\|M^T y^t \|,1\}}\right\} < tol
\end{equation}
for some $tol > 0$,\footnote{For
the MFBS method, we used $(u^t,v^t)$ in place of $(z^t,y^t)$ in \eqref{termflasso}, and set $x^{t+1}=\nabla h(u^t)$ for all $t$.} where
\begin{equation*}
  \tilde \nu^t = \begin{cases}
    -(A^T)^\dagger M^Ty^t& {\rm if\ } 0\le -(A^T)^\dagger M^Ty^t\le 1,\\
   (A^T)^\dagger x^t & {\rm otherwise},
  \end{cases}
\end{equation*}
the set $\Gamma := \{s:\;1\le s\le t, \mbox{ $s$ is a multiple of $500$}\}$, and the above criterion \eqref{termflasso} is checked every $500$ iterations.
In our test below, we take $tol = 1e-4$ for both algorithms. Moreover,
for our PPG algorithm, we take\footnote{We experimented with $\beta = \frac{0.05}{L}$, $\frac{1}{L}$ and $\frac{1.95}{L}$. Setting
$\beta = \frac{1.95}{L}$ seems to work best for all values of $\alpha$.} $\beta = \frac{1.95}{L} = \frac{7.8}{\lambda_{\max}(A^TA)}$, $\tau = \frac{39}{\lambda_{\max}(A^TA)}$ and $\gamma = 1 + 0.95\min\{0.5,\frac{1}{\beta L}-0.5\}$, while for the MFBS method
\eqref{extragradient}, we take $L_{\M} = \frac{1}{2}(0.25\lambda_{\max}(A^TA) + \sqrt{(0.25\lambda_{\max}(A^TA))^2 + 20})$ as discussed above, and $\sigma=0.95$.

We generate random instances for our test. We start by generating an $m\times (n-1)$ matrix $C$
with i.i.d. standard Gaussian entries.
We then normalize $C$ so that each column has norm $1$ as in \cite[Section~4.1]{CandesTao07}. Moreover, mimicking \cite[Section~6.2]{MaZhang13} and \cite[Section~3.1]{YeXie11},
we take
\begin{equation*}
  \hat x_j:= \begin{cases}
    20\xi_1 &{\rm if \ }j = 1,2, . . . ,20,\\
    30\xi_2 &{\rm if \ }j = 41,\\
    10\xi_3 &{\rm if \ }j = 71, . . . ,85,\\
    20\xi_4 &{\rm if \ }j = 121, . . . ,125,\\
    0 &{\rm otherwise},
  \end{cases}
\end{equation*}
where $\xi_i$, $i=1,..,4$, are random numbers following standard Gaussian distribution. We then set $b$ to be the sign
vector of $C\hat x + \xi_5e$, where $\xi_5$ is a random number in $[0,1]$ and $e$ is the vector of all ones as in \cite[Section~6.2]{MaZhang13}.
We next form a matrix $C'$ by multiplying each column of $C$ entrywise with $-b$. The $m\times n$ input matrix $A$
for \eqref{fusedlasso} is then formed as $\begin{pmatrix}C' & -b\end{pmatrix}$. Furthermore, we set $\lambda_1 = \alpha m$ and $\lambda_2 = 100\lambda_1$
for some $\alpha > 0$, mimicking the choice in \cite[Section~6.2]{MaZhang13}.

We fix $m = 250$. For each $n = 10000$, $20000$, $30000$ and $\alpha = 1e-4$, $3e-4$ and $5e-4$,
we generate $10$ instances as described above. The computational results, averaged over the $10$ instances,
are reported in Table~\ref{t41}, where we report the number of iterations ({\bf iter}), CPU time in seconds ({\bf cpu}),
primal objective value \eqref{fusedlasso} at termination ({\bf pobj}), dual objective value \eqref{fusedlassodual} at termination ({\bf dobj})
and the relative dual infeasibility
\[\frac{\|A^T\tilde \nu^t + M^T y^t \|}{\max\{\|A^T\tilde \nu^t\|,\|M^T y^t \|,1\}}\]
at termination ({\bf dfeas}). We only report the results for our algorithm since the MFBS method \eqref{extragradient} never terminates
within $20000$ iterations. From Table~\ref{t41}, we observe that our algorithm performs reasonably well, and tends to be slower when $\alpha$ is smaller.

\begin{table}[h!]
\caption{\small Results for PPG algorithm on solving \eqref{fusedlasso}}
 \label{t41}\centering
\begin{footnotesize}
\begin{tabular}{|c c||r r r|}
\hline  
$n$ & \multicolumn{1}{c}{$\alpha$} & \multicolumn{1}{c}{\bf iter} &
\multicolumn{1}{c}{\bf cpu} & \multicolumn{1}{c|}{\bf pobj/dobj/dfeas}\\
%

\hline

  10000 & 1e-4 &  6450 &  16.7 & 1.167e+2/1.167e+2/1.1e-5 \\
  10000 & 3e-4 &  2400 &   6.1 & 1.560e+2/1.560e+2/7.9e-6 \\
  10000 & 5e-4 &  1500 &   3.8 & 1.670e+2/1.670e+2/5.3e-6 \\
  20000 & 1e-4 &  5700 &  39.4 & 1.262e+2/1.262e+2/1.3e-5 \\
  20000 & 3e-4 &  2950 &  22.1 & 1.491e+2/1.491e+2/9.3e-6 \\
  20000 & 5e-4 &  1600 &  12.5 & 1.631e+2/1.631e+2/4.4e-6 \\
  30000 & 1e-4 &  8150 &  97.7 & 1.144e+2/1.145e+2/1.1e-5 \\
  30000 & 3e-4 &  2900 &  31.1 & 1.504e+2/1.504e+2/7.2e-6 \\
  30000 & 5e-4 &  1850 &  21.9 & 1.652e+2/1.652e+2/8.8e-6 \\

\hline
\end{tabular}
\end{footnotesize}
\end{table}

\section{Concluding remarks}\label{sec6}

We have proposed a new algorithm for solving \eqref{FBS} which admits easy subproblems
assuming the proximal mappings of $\tau P$, $\tau > 0$, are easy to compute. Our algorithm
reduces to the usual proximal gradient algorithm when the affine map is just the identity map, and
is equivalent to applying a proximal AMA to the dual of \eqref{FBS}. We established
global convergence and discussed iteration complexity. Our computational results on solving
nuclear norm regularized system realization problem and the fused lasso logistic regression problem
show that our algorithm works reasonably well on large-scale instances.
%

\vskip 20 true pt

{\bf Acknowledgements.} The author would like to thank Christopher Jordan-Squire for carefully proofreading
an early version of this manuscript, and Stephen Beck for pointing out the important references \cite{Condat13} and \cite{Vu13}.


\begin{thebibliography}{99}
  \bibitem{BeckTeboulle09}
  A. Beck and M. Teboulle.
  \newblock A fast iterative shrinkage-thresholding algorithm for linear inverse problems.
  \newblock {\em SIAM J. Imaging Sci.} 2, pp. 183--202 (2009).

  \bibitem{BeckerCandesGrant11}
  S. Becker, E. J. Cand\`{e}s and M. Grant.
  \newblock Templates for convex cone problems with applications to sparse signal recovery.
  \newblock {\em Math. Program. Comput.} 3, pp. 165--218 (2011).

  \bibitem{BoLe06}
  J. M. Borwein and A. S. Lewis.
  \newblock {\em Convex Analysis and Nonlinear Optimization}.
  \newblock Springer, 2nd edition (2006).

  \bibitem{CandesTao07}
  E. J. Cand\`{e}s and T. Tao.
  \newblock The Dantzig selector: statistical estimation when $p$ is much larger than $n$.
  \newblock {\em Ann. Stat.} 35, pp. 2313--2351 (2007).

  \bibitem{Condat13}
  L. Condat.
  \newblock A primal-dual splitting method for convex optimization involving Lipschitzian, proximable and linear composite terms.
  \newblock {\em J. Optimiz. Theory App.} 158, pp. 460--479 (2013).

  \bibitem{Eckstein94}
  J. Eckstein.
  \newblock Some saddle-function splitting methods for convex programming.
  \newblock {\em Optim. Method Softw.} 4, pp. 75--83 (1994).

   \bibitem{EckB92}
  J. Eckstein and D. P. Bertsekas.
  \newblock On the Douglas-Rachford splitting method and the proximal point algorithm for maximal monotone operators.
  \newblock {\em Math. Prog.} 55, pp. 293--318 (1992).

  \bibitem{ElMiRu07}
  M. Elad, P. Milanfar and R. Rubinstein.
  \newblock Analysis versus synthesis in signal priors
  \newblock {\em Inverse Probl.} 23, pp. 947--968 (2007).

  \bibitem{FGlowinski83}
  M. Fortin and R. Glowinski.
  \newblock On decomposition-coordination methods using an augmented Lagrangian.
  \newblock In M. Fortin and R. Glowinski, eds., {\em Augmented Lagrangion Methods: Applications to the Solution of Boundary Problems.}
  North-Holland, Amsterdam (1983).

  \bibitem{FPST12}
  M. Fazel, T. K. Pong, D. Sun and P. Tseng.
  \newblock Hankel matrix rank minimization with applications to system identification and realization.
  \newblock {\em SIAM J. Matrix Anal. A.} 34, pp. 946--977 (2013).

  \bibitem{FukuMine81}
  M. Fukushima and H. Mine.
  \newblock A generalized proximal point algorithm for certain non-convex minimization problems.
  \newblock {\em Int. J. Syst. Sci.} 12, pp. 989--1000 (1981).

  \bibitem{Gabay83}
  D. Gabay.
  \newblock Applications of the method of multipliers to variational inequalities.
  \newblock In M. Fortin and R. Glowinski, eds., {\em Augmented Lagrangion Methods: Applications to the Solution of Boundary Problems.} North-Holland, Amsterdam, 1983.

  \bibitem{GaM76}
  D. Gabay and B. Mercier.
  \newblock A dual algorithm for the solution of nonlinear variational problems via finite element approximations.
  \newblock {\em Comput. Math. Appl.} 2, pp. 17--40 (1976).

  \bibitem{GlM75}
  R. Glowinski and A. Marroco.
  \newblock  Sur l'approximation, par elements finis d'ordre un, et la resolution, par penalisation-dualit'e, d'une classe de problemes de Dirichlet non lineares.
  \newblock {\em Revue Francaise d'Automatique, Informatique et Recherche Op'erationelle.} 9 (R-2), pp. 41--76 (1975).

  \bibitem{HLHY02}
  B. He, L. Liao, D. Han and H. Yang.
  \newblock A new
  inexact alternating directions method for monotone variational
  inequalities.
  \newblock{\em Math. Program.} 92, pp. 103--118 (2002).

  \bibitem{Higham02}
  N. J. Higham.
  \newblock {\em Accuracy and Stability of Numerical Algorithms}
  \newblock SIAM, Philadelphia, 2nd edition (2002).


  \bibitem{LanLuMon09}
  G. Lan, Z. Lu and R. D. C. Monteiro.
  \newblock Primal-dual first-order methods with $O(1/\epsilon)$ iteration-complexity for cone programming.
  \newblock {\em Math. Program.} 126, pp. 1--29 (2011).

  \bibitem{LiV08}
  Z. Liu and L. Vandenberghe.
  \newblock Interior-point method for nuclear norm
  approximation with application to system identification.
  \newblock {\em SIAM. J. Matrix Anal. A.} 31, pp. 1235--1256 (2009).

  \bibitem{LiV09}
  Z. Liu and L. Vandenberghe.
  \newblock Semidefinite programming methods for
  system realization and identification.
  \newblock {\em Proc. 48th IEEE
  Conference on Decision and Control}, pp. 4676--4681 (2009).

  \bibitem{LHV12}
  Z. Liu, A. Hansson and L. Vandenberghe.
  \newblock Nuclear norm system idenification with missing inputs and outputs.
  \newblock {\em Syst. Control Lett.} 62, pp. 605--612 (2013).

  \bibitem{MGC10}
  S.~Ma, D.~Goldfarb and L.~Chen.
  \newblock Fixed point and Bregman iterative methods for matrix rank minimization.
  \newblock {\em Math. Program.} 128, pp. 321--353 (2011).

  \bibitem{MaZhang13}
  S. Ma and S. Zhang.
  \newblock An extragradient-based alternating direction method for convex minimization.
  \newblock Preprint, Jan 2013. Available at \verb+http://arxiv.org/abs/1301.6308+.


  \bibitem{MoF10}
  K. Mohan and M. Fazel.
  \newblock Reweighted nuclear norm minimization with application to system identification.
  \newblock {\em Proc. American Control Conference} (2010).

  \bibitem{MonteiroSvaiter11}
  R. D. C. Monteiro and B. F. Svaiter.
  \newblock Complexity of variants of Tseng's modified F-B splitting and Korpelevich's methods for hemi-variational inequalities with applications to saddle point and convex optimization problems.
  \newblock {\em SIAM J. Optim.} 21, pp. 1688--1720 (2011).


  \bibitem{Nes83}
  Y. Nesterov.
  \newblock A method for solving a convex programming problem with convergence
  rate {$O(1/k^2)$}.
  \newblock{\em Soviet Math. Dokl.} 27(2), pp. 372--376 (1983).

  \bibitem{Nes03}
  Y. Nesterov.
  \newblock{\em Introductory Lectures on Convex Optimization: A Basic Course.}
  \newblock Kluwer Academic Publishers (2003).

  \bibitem{Nes05}
  Y. Nesterov.
  \newblock Excessive gap technique in nonsmooth convex minimization.
  \newblock {\em SIAM J. Optim.} 16, pp. 235--249 (2005).

  \bibitem{Nes052}
  Y. Nesterov.
  \newblock Smooth minimization of non-smooth functions.
  \newblock {\em Math. Program.} 103, pp. 127--152 (2005).

  \bibitem{Patriksson98}
  M. Patriksson.
  \newblock Cost approximation: a unified framework of descent algorithms for nonlinear programs.
  \newblock {\em SIAM J. Optim.} 8, pp. 561--582 (1998).


  \bibitem {PTJY09}
  T.~K. Pong, P. Tseng, S. Ji  and J. Ye.
  \newblock Trace norm
  regularization: reformulations, algorithms, and multi-task learning.
  \newblock {\em SIAM J. Optim.} 20, pp. 3465--3489 (2010).

  \bibitem{Roc70}
  R. T. Rockafellar.
  \newblock {\em  Convex Analysis}.
  \newblock Princeton University Press, Princeton (1970).

  \bibitem{ToYu10}
  K.-C. Toh and S. Yun.
  \newblock An accelerated proximal gradient algorithm for nuclear norm regularized least squares problems.
  \newblock {\em Pac. J. Optim.} 6, pp. 615--640 (2010).

  \bibitem{SchRouBach11}
  M. W. Schmidt, N. Le Roux and F. Bach.
  \newblock Convergence rates of inexact proximal-gradient methods for convex optimization.
  \newblock {\em NIPS}, pp. 1458--1466 (2011).

  \bibitem{ToSuSu12}
  R. Tomioka, T. Suzuki and M. Sugiyama.
  \newblock Augmented Lagrangian methods for learning, selecting, and combining features.
  \newblock In S. Sra, S. Nowozin and S. J. Wright (Eds.), {\em Optimization for Machine Learning}. MIT Press (2012).

  \bibitem{Tse91}
  P. Tseng.
  \newblock Applications of a splitting algorithm to decomposition in convex programming and variational inequalities.
  \newblock {\em SIAM J. Control Optim.} 29, pp. 119--138 (1991).

  \bibitem{Tse00}
  P. Tseng.
  \newblock A modified forward-backward splitting method for maximal monotone mappings.
  \newblock {\em SIAM J. Control Optim.} 38, pp. 431--446 (2000).


  \bibitem{Tse10}
  P. Tseng.
  \newblock Approximation accuracy, gradient methods, and error bound for structured convex
  optimization.
  \newblock  {\em Math. Program.} 125, pp. 263--295 (2010).

  \bibitem{V12}
  L. Vandenberghe.
  \newblock Convex optimization techniques in system identification.
  \newblock {\em Proc. IFAC Symposium on System Identfication}, pp. 71--76 (2012).

  \bibitem{Vu13}
  B. V\~{u}.
  \newblock A splitting algorithm for dual monotone inclusions involving cocoercive operators.
  \newblock {\em Adv. Comput. Math.} 38, pp. 667--681 (2013).

  \bibitem{WangBan13}
  H. Wang and A. Banerjee.
  \newblock Online alternating direction method.
  \newblock Preprint, June 2013. Available at \verb+http://arxiv.org/abs/1306.3721+.
%

  \bibitem{XuW2011}
  M.~H.~Xu and T.~Wu.
  \newblock A class of linearized proximal alternating direction methods.
  \newblock {\em J. Optim. Theory Appl.} 151, pp. 321--337 (2011).

  \bibitem{YaZ09}
  J. Yang and Y. Zhang.
  \newblock Alternating direction
  algorithms for $\ell_1$-problems in compressive sensing.
  \newblock {\em SIAM J. Sci. Comput.} 33, pp. 250--278 (2011).

  \bibitem{YeXie11}
  G. Ye and X. Xie.
  \newblock Split Bregman method for large scale fused Lasso.
  \newblock {\em Comput. Stat. Data An.} 55, pp. 1552--1569 (2011).

  \bibitem{YODG08}
  W. Yin, S. Osher, J. Darbon and D. Goldfarb.
  \newblock Bregman iterative algorithms for compressed sensing and related problems.
  \newblock {\em SIAM J. Imaging Sci.} 1, pp. 143--168 (2008).

  \bibitem{ZBO09}
  X. Zhang, M. Burger and S. Osher.
  \newblock A unified
  primal-dual algorithm framework based on Bregman iteration.
  \newblock  {\em J. Sci. Comput.} 46, pp. 20--46 (2011).
\end{thebibliography}
\end{document}